\theoremstyle{plain}
\newtheorem{theorem}{Theorem}
\newtheorem{fact}[theorem]{Fact}
\newtheorem{corollary}[theorem]{Corollary}
\newtheorem{proposition}[theorem]{Proposition}
\theoremstyle{definition}
\newtheorem{conjecture}[theorem]{Conjecture}
\theoremstyle{remark}
\newtheorem{remark}[theorem]{Remark}
\def\Sep{{\sf Sep}}
\def\Sepp{{\sf Sepp}}
\def\lcm{{\sf lcm}}
\def\cA{\mathcal{A}}
\title{Lower Bounds on Words Separation:\\ Are There Short Identities in Transformation Semigroups?}
\author{Andrei A. Bulatov\thanks{Supported by an NSERC Discovery grant}\\
\small School of Computing Science\\[-0.8ex]
\small Simon Fraser University\\[-0.8ex] 
\small Burnaby, BC, Canada\\
\small\tt andrei.bulatov@gmail.com\\
\and
Olga Karpova \\
\small Institute of Mathematics\\[-0.8ex] \small and Computer Science\\[-0.8ex]
\small Ural Federal University\\[-0.8ex]
\small Ekaterinburg, Russia\\
\small\tt sckleppi@gmail.com\\
\and
Arseny M. Shur\thanks{Partialy supported by the grant 16-01-00795 of the Russian Foundation for Basic Research}\\
\small Institute of Mathematics\\[-0.8ex] \small and Computer Science\\[-0.8ex]
\small Ural Federal University\\[-0.8ex]
\small Ekaterinburg, Russia\\
\small\tt arseny.shur@urfu.ru\\
\and
Konstantin Startsev\\
\small Institute of Mathematics\\[-0.8ex] \small and Computer Science\\[-0.8ex]
\small Ural Federal University\\[-0.8ex]
\small Ekaterinburg, Russia\\
\small\tt kon7075@yandex.ru
}
\date{\dateline{Sep 10, 2016}{XX}\\
\small Mathematics Subject Classifications: 68R15, 68Q70, 20B30, 20M20}
\begin{document}

\maketitle

\begin{abstract}
The words separation problem, originally formulated by Goralcik and Koubek (1986), is stated as follows. Let $\Sep(n)$ be the minimum number such that for any two words of length $\le n$ there is a deterministic finite automaton with $\Sep(n)$ states, accepting exactly one of them. The problem is to find the asymptotics of the function $\Sep$. This problem is inverse to finding the asymptotics of the length of the shortest identity in full transformation semigroups $T_k$. The known lower bound on $\Sep$ stems from the unary identity in $T_k$. We find the first series of identities in $T_k$ which are shorter than the corresponding unary identity for infinitely many values of $k$, and thus slightly improve the lower bound on $\Sep(n)$. Then we present some short positive identities in symmetric groups, improving the lower bound on separating words by permutational automata by a multiplicative constant. Finally, we present the results of computer search for short identities for small $k$.

\bigskip\noindent \textbf{Keywords:} 
Words separation, finite automaton, transformation semigroup, symmetric group, identity
\end{abstract}

\section{Introduction}

Telling two inputs apart is one of the simplest computational problems one can imagine. As usual, the inputs are thought of as two  finite words $u,v$ 
over a finite alphabet $\Sigma$. Both $u$ and $v$ are known in advance; then one of them is fed to the algorithm which should decide whether this is $u$ or $v$. For a powerful computational model, such as the RAM model, the problem can be solved with constant space (in the length of the words): we need just one register to scan the input word until we reach a position in which $u$ and $v$ differ and look at the symbol at this position to decide whether we see $u$ or $v$ (a word can be supposed to end with a unique sentinel symbol). However, if the computational model is weak, like the finite automaton, the situation changes drastically, and distinguishing two words can no longer be done with constant space. The problem of determining the minimal size of a finite automaton separating two given words is NP-hard, as follows from some known algebraic results (see the discussion below). Moreover, even if we look at the maximal possible size of such automaton for words of a given length, very little is known about the asymptotics of this value. To make it more precise, we need some definitions. 

We use the array notation $w=w[1..n]$ to represent finite words over finite alphabet $\Sigma$ when appropriate, and also the standard notions of factors, prefixes, suffixes. We write $|w|$ for the length of $w$ and $|w|_x$ for the number of occurrences of the letter $x$ in $w$. We treat a deterministic finite automaton (dfa) as a quadruple $\cA=\{\Sigma,Q,\delta,s\}$, consisting of a finite alphabet, a finite set of states, a transition function, and an initial state. We write $q.w$ for the state of $\cA$ obtained by reading the word $w\in\Sigma^*$ starting in the state $q\in Q$. The dfa $\cA$ \emph{separates} words $u,v\in\Sigma^*$ if $s.u\ne s.v$. 
(Equivalently, there exists a set $T\subset Q$ of accepting states such that exactly one of the words $u,v$ is accepted.) Let $\Sep(u,v)$ be the minimum number of states in a dfa separating $u$ and $v$. 

Let $T_k$ denote the semigroup of all selfmaps of the set $\{1,\ldots, k\}$ under the composition of maps; it is called the \emph{full transformation semigroup} on $k$ elements. An \emph{identity} in a semigroup $T$ is a pair of words $(u,v)$ such that the images of $u$ and $v$ under any map $\Sigma\to T$ are equal as the elements of $T$. By the \emph{length} of the identity $(u,v)$ we mean the maximum of $|u|,|v|$. We write $u\equiv_k v$ to indicate the fact that $(u,v)$ is an identity in $T_k$. The \emph{transition semigroup} of a dfa $\cA$ is a subsemigroup of $T_{|Q|}$ consisting of all maps $w: q\to q.w$, where $w\in\Sigma^*$. The following simple fact connects identities and separation:

\begin{fact} \label{sep_ide}
For any words $u,v$, the identity $u\equiv_k v$ holds if and only if $\Sep(u,v)>k$. 
\end{fact}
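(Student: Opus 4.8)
The plan is to prove the two directions separately, unwinding the definitions of identity and of separation, and using the correspondence between dfas and subsemigroups of $T_k$. The key observation is that evaluating the words $u,v$ under a map $\Sigma\to T_k$ is exactly the same as reading $u,v$ in a dfa with $k$ states whose letter-transitions are prescribed by that map.

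First I would prove that if $u\equiv_k v$ then $\Sep(u,v)>k$. Suppose for contradiction that some dfa $\cA=\{\Sigma,Q,\delta,s\}$ with $|Q|\le k$ separates $u$ and $v$, i.e.\ $s.u\ne s.v$. Padding $Q$ with unused states if necessary, we may assume $|Q|=k$ and identify $Q$ with $\{1,\ldots,k\}$. The transition function $\delta$ gives a map $\varphi\colon\Sigma\to T_k$ sending each letter $x$ to the selfmap $q\mapsto q.x$; extending $\varphi$ multiplicatively (composition of maps) yields, for every word $w$, the selfmap $q\mapsto q.w$. Since $u\equiv_k v$ is an identity in $T_k$, the images of $u$ and $v$ under $\varphi$ coincide as elements of $T_k$, so $q.u=q.v$ for all $q\in Q$; in particular $s.u=s.v$, contradicting the choice of $\cA$. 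Hence no dfa with $\le k$ states separates $u$ and $v$, that is $\Sep(u,v)>k$.

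Conversely, I would show that if $\Sep(u,v)>k$ then $u\equiv_k v$. Take an arbitrary map $\varphi\colon\Sigma\to T_k$; we must check that $\varphi(u)=\varphi(v)$ in $T_k$. Build the dfa $\cA=\{\Sigma,\{1,\ldots,k\},\delta,s\}$ with $\delta(q,x)=\varphi(x)(q)$ and an arbitrary initial state $s$. Then for every word $w$ and every state $q$ we have $q.w=\varphi(w)(q)$, by a straightforward induction on $|w|$ using that $\varphi$ is a homomorphism and composition in $T_k$ matches concatenation of words. Now $\cA$ has $k$ states, so by hypothesis it does not separate $u$ and $v$: $s.u=s.v$ for every choice of $s$, which means $\varphi(u)(q)=\varphi(v)(q)$ for all $q\in\{1,\ldots,k\}$, i.e.\ $\varphi(u)=\varphi(v)$. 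Since $\varphi$ was arbitrary, $u\equiv_k v$.

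The argument is essentially a bookkeeping exercise, so there is no real obstacle; the only point requiring a little care is the quantifier handling in the ``only if'' direction — one must note that a single dfa separates $u,v$ for \emph{some} initial state iff the selfmaps induced by $u$ and $v$ differ on \emph{some} state, and conversely a failure to separate for \emph{every} initial state is what forces the two selfmaps to be equal. Matching up the $\le k$ versus $=k$ state counts (by adding isolated states) is the other routine detail to mention.
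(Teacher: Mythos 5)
Your proposal is correct and follows essentially the same route as the paper: the forward direction reads the identity in the transition semigroup of any $k$-state dfa (with your padding of smaller dfa's being the routine extra detail), and the converse turns an arbitrary map $\Sigma\to T_k$ into a $k$-state dfa whose transitions realize it. No gaps; nothing further to add.
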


Indeed, if $u\equiv_k v$, then this identity holds for the transition semigroup of any $k$-state dfa $\cA$, implying $q.u=q.v$ in it for any state $q$. If otherwise $\rho(u)\ne\rho(v)$ in $T_k$ for some map $\rho: \Sigma\to T_k$, then the transformations $\rho(a)$, $a\in\Sigma$ can be used to define transitions in the $k$-state dfa separating $u$ and $v$.

It is known that the problem of checking whether $u\equiv_k v$ is coNP-complete for any $k>2$ \cite{AVG09,Kli12}. So by Fact~\ref{sep_ide}, it is NP-complete to check whether $\Sep(u,v)\le k$.

Let $\Sep(n)=\max_{u,v\in\Sigma^{\le n}}\Sep(u,v)$. The problem of describing the asymptotics of $\Sep(n)$ was first posed by Goralcik and Koubek \cite{GoKo86}. Due to Fact~\ref{sep_ide}, this problem is equivalent to finding the asymptotics of the minimum length of an identity in $T_k$. For the existing results on the identities in $T_k$ see, e.g., \cite{PSSSV94} and the references therein. Up to now the shortest known identity in $T_k$ has been the unary identity
\begin{equation} \label{unary}
x^{k-1}=x^{k-1+\lcm(k)},
\end{equation}
where $\lcm(k)$ denotes the least common multiple of the integers $1,\ldots,k$. Hence, $\Sep(n)>k$ for $n\ge \lcm(k)+k-1$. Since $\log(\lcm(k))=k+o(k)$ by the Prime Number Theorem\footnote{In this paper, (a) the notation $\log$ stands for the natural logarithm; (b) the small-$o$-expressions can have any sign, so we always write '$+$' before them.}, this inequality can be rewritten as $\Sep(n)\ge \log n+o(\log(n))$. The logarithmic lower bound was presented already in \cite{GoKo86}, while the best known upper bound for $\Sep(n)$, obtained by Robson \cite{Rob89}, is $O(n^{2/5}\log^{3/5} n)$. Such a huge gap suggests that any of these bounds can be very loose. In this paper we present a new series of identities in $T_k$. These identities are shorter than \eqref{unary} whenever $k$ is a prime or a power of an odd prime. (More precisely, if $k=p^i$ for a prime $p$, then our identities are approximately $p/2$ times shorter than \eqref{unary}.) As far as we know, this is the first example of identities in $T_k$ that are shorter than \eqref{unary}.

There are several variations of the words separation problem; see, e.g., \cite{DESW11}. One variation requires a separating dfa to be \emph{permutational}, which means that every letter acts on the set of states as a permutation (i.e., $|Q.a|=|Q|$ for any $a\in\Sigma$). We denote the analog of the function $\Sep$ for permutational automata by $\Sepp$. Similar to Fact~\ref{sep_ide}, $\Sepp(u,v)>k$ if and only if the pair $(u,v)$ is an identity of the symmetric group $S_k$. Such group identities in semigroup signature are called \emph{positive} and denoted below by $u \cong_k v$. The best known upper bound for $\Sepp(n)$ also belongs to Robson \cite{Rob96} and is $O(n^{1/2})$. To get reasonable lower bounds on $\Sepp(n)$, one should find positive identities in $S_k$ which are shorter than the unary identity $x^{\lcm(k)}=1$. In general, the problem of finding short identities in finite symmetric groups has drawn some attention in the literature. The existence of an identity of length $O(e^{\sqrt{n\log n}})$ was proved in \cite{BoMc11} based on Landau's bound on the maximum order of a permutation \cite{Lan03}. Very recently, the existence of identities of length $O(e^{\log^4 n \log\log n})$ was established by Kozma and Thom \cite{KoTh16} based on a new result on the diameter of the Cayley graph of $S_k$ \cite{HeSe14}. However, the method of finding short identities in $S_k$ uses chains of iterated commutators and thus cannot be translated to produce short positive identities. So the problem of the existence of short positive identities remains open. Here we present some series of such identities, showing that $\Sepp(n)\ge \frac32\log n+o(\log n)$. Besides this, we present the results of computer-assisted studies for small $k$, providing, in particular, some exact values for the functions $\Sep$ and $\Sepp$.  

The rest of the paper consists of two sections. In Section~\ref{s:semi} we present our results on $\Sep$ and the identities in $T_k$, while in Section~\ref{s:grou} we consider $\Sepp$ and positive identities in $S_k$, together with the connection between $\Sep$ and $\Sepp$.



\section{Identities in $T_k$} \label{s:semi}

An identity $(u,v)$ of a semigroup $T$ is \emph{reducible} if there is an identity $(u',v')$ of $T$ and a nonempty word $w$ such that either $u=wu', v=wv'$, or $u=u'w, v=v'w$; otherwise, the identity is said to be \emph{irreducible}. Since we are interested in short identities, we will consider only irreducible ones. As was already observed, the  shortest irreducible unary identity of any semigroup $T_k$ is  identity \eqref{unary}. The following easy fact is well known; a proof can be found in \cite{DESW11}.

\begin{fact} \label{binary}
For any pair of non-unary words $(u,v)$ such that $u\equiv_k v$ there is a pair $(u',v')$ of \emph{binary} words such that $|u'|=|u|$, $|v'|=|v|$, and $u'\equiv_k v'$.
\end{fact}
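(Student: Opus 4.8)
The plan is to collapse the alphabet of $u$ and $v$ down to two letters by a carefully chosen substitution, in such a way that the identity is preserved and the lengths are untouched. Since we only substitute letters for letters, the lengths $|u|,|v|$ are automatically unchanged, so the whole content is to keep the identity valid while making both words binary.

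First I would fix the alphabet $\Sigma=\{a_1,\dots,a_m\}$ actually occurring in $u$ and $v$, and observe that it suffices to produce a map $\varphi\colon\Sigma\to\{0,1\}^+$ assigning to each letter a \emph{fixed transformation word} over a two-letter alphabet $\{0,1\}$ of the \emph{same length for all letters} — no, more simply: it suffices to find an injection-free recoding of letters by single letters of $\{0,1\}^*$ of bounded length that separates the roles of the $a_i$'s inside $T_k$. The cleanest route uses the following standard idea. Work over the two-letter alphabet $\{x,y\}$ and pick, for the $i$-th letter $a_i$, the word $w_i = y\,x\,y\,x^2\,y\,x^3\cdots$ — or, even cleaner, replace each $a_i$ by $x^{c_i} y$ for distinct positive integers $c_1<\dots<c_m$; this is a so-called \emph{prefix code}, so the word $\bar u$ obtained from $u$ by the substitution $a_i\mapsto x^{c_i}y$ determines $u$ uniquely and likewise for $\bar v$. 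The key point is: if $\rho\colon\{x,y\}\to T_k$ is \emph{any} assignment, then the compositions $\rho(x^{c_i}y)$, $i=1,\dots,m$, form an assignment $\rho'\colon\Sigma\to T_k$, and $\rho'(u)=\overline{\rho}(\bar u)$, $\rho'(v)=\overline{\rho}(\bar v)$. Hence if $u\equiv_k v$ holds (over $\Sigma$), then $\bar u \equiv_k \bar v$ holds over $\{x,y\}$, because every $\{x,y\}$-assignment factors through a $\Sigma$-assignment this way. This gives a binary identity, but with \emph{longer} words, which is not what Fact~\ref{binary} claims.

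So the real obstacle — and the place where I expect to spend the effort — is keeping the lengths exactly equal to $|u|$ and $|v|$, i.e.\ doing the recoding letter-by-letter rather than by a code. For this one cannot substitute words for letters; instead one argues as follows. Suppose $\Sigma=\{a_1,\dots,a_m\}$ with $m\ge 3$. I would show that one can identify two of the letters — say replace every occurrence of $a_m$ by $a_{m-1}$ — without destroying the identity, \emph{provided} one first conjugates the situation by a substitution that makes $a_m$ behave, relative to the other letters, like a suitable power or relabelling of $a_{m-1}$. Concretely: given an $(m-1)$-letter assignment $\rho$, one wants to manufacture an $m$-letter assignment $\rho^\sharp$ with $\rho^\sharp(u)=\rho(u')$ and $\rho^\sharp(v)=\rho(v')$, where $u',v'$ are the words after identification; then $u\equiv_k v$ forces $u'\equiv_k v'$. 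The trick is to let the new "$a_m$-slot" be filled, in $\rho$, by a \emph{new} letter $b$, and to map $b$ into $T_k$ by the same transformation as some fixed letter already present — but to arrange, via a preliminary relabelling of the positions, that this identification is consistent. The inductive step reduces $m$ to $m-1$ and, after $m-2$ steps, to $m=2$; at each step lengths are preserved since we only ever rename letters. The heart of the argument, and the one genuinely nontrivial verification, is that the identification $a_m\mapsto a_{m-1}$ can always be \emph{preceded} by an appropriate renaming that makes it safe — equivalently, that the free semigroup on $\Sigma$ retracts onto the free semigroup on two letters in a way compatible with the $T_k$-assignments; I would check this by exhibiting the retraction explicitly and tracking how an arbitrary $\{x,y\}\to T_k$ assignment pulls back. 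Since the paper attributes this to \cite{DESW11} and calls it "easy and well known," I expect the intended proof is precisely this letter-identification / retraction argument, and the write-up should mainly make the pullback of assignments explicit and note that renaming letters changes neither $|u|$ nor $|v|$.
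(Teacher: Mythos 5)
There is a gap, and interestingly it is not where you think it is. The mechanism you describe in your second route is the whole proof: a letter-to-letter substitution $\varphi\colon\Sigma\to\{x,y\}$ (in particular, identifying $a_m$ with $a_{m-1}$) automatically preserves identities, because any assignment $\rho\colon\{x,y\}\to T_k$ pulls back to the assignment $\rho\circ\varphi\colon\Sigma\to T_k$, and then $\rho(\varphi(u))=(\rho\circ\varphi)(u)=(\rho\circ\varphi)(v)=\rho(\varphi(v))$. No ``preliminary relabelling,'' ``conjugation,'' or consistency condition is needed to make the identification safe; the step you single out as ``the heart of the argument, and the one genuinely nontrivial verification'' is vacuous, and since you defer it rather than carry it out, your write-up is not a complete proof as it stands. (Your first route, via the prefix code $a_i\mapsto x^{c_i}y$, is correctly discarded because it inflates lengths.)

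What you never address is the one point that does require a choice: the collapsed pair must remain a \emph{nontrivial} binary identity, i.e.\ $u'\ne v'$ (this is the only reason the fact is not trivially witnessed by sending every letter to $x$, and it is what the paper needs in order to restrict the search for short non-unary identities to the binary alphabet). An arbitrary identification can destroy this: if $u$ and $v$ differ only in letters that get merged, then $u'=v'$. The standard fix, which is essentially the argument in \cite{DESW11}, is one line: if $|u|\ne|v|$, any letter-to-letter map works since the lengths already differ; if $|u|=|v|$, pick a position $i$ with $u[i]\ne v[i]$ and define $\varphi$ by $u[i]\mapsto x$ and every other letter $\mapsto y$, so that $\varphi(u)$ and $\varphi(v)$ differ at position $i$ (and the pair uses both letters). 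Together with the pullback observation above, this yields binary words of exactly the same lengths forming a genuine identity in $T_k$, with no induction on the alphabet size needed.
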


Hence, in the quest for short non-unary identities in $T_k$ we restrict ourselves to identities and dfa's over the binary alphabet $\{x,y\}$. The following necessary conditions for an identity in $T_k$ are known from \cite{GoKo86,Rob89,DESW11}.

\begin{fact} \label{necessary}
If $u\equiv_k v$, then the words $u,v$ have (i) the same prefix of length $k{-}2$, (ii) the same suffix of length $k{-}1$, and (iii) the same set of factors of length $k{-}1$\footnote{This is related, but not equivalent, to the $(k{-}1)$-\emph{Abelian equivalence} of $u$ and $v$. The notion of $k$-Abelian equivalence is popular in modern combinatorics of words; see, e.g., \cite{KSZ13} and the references therein.}.
\end{fact}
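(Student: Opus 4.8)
The plan is to prove the three conditions by transporting the problem into concrete transformation semigroups $T_k$ and choosing, for each condition, a specific map $\rho\colon\{x,y\}\to T_k$ under which the two words must act identically; from the structure of that action one reads off the claimed constraint. Throughout I will use the contrapositive: if, say, $u$ and $v$ differ in their prefix of length $k-2$, I exhibit a single $\rho$ with $\rho(u)\ne\rho(v)$, contradicting $u\equiv_k v$.

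For (i), the natural witness is a transformation $a$ that ``counts how far we have walked'': let $x$ act on $\{1,\dots,k\}$ by $i\mapsto\min(i+1,k)$ and let $y$ act as some transformation that also moves $1$ forward but differently, or simply take both $x$ and $y$ to be distinct ``shift-and-absorb'' maps. Starting from state $1$, after reading any word $w$ the state is determined by the sequence of letters read so far until the walk is absorbed at $k$, which happens only after $k-1$ steps. Consequently, two words whose first $k-2$ letters already agree but whose letter in position $\le k-2$ first differs will land in different states; a careful choice of the two shift maps (e.g.\ $x$ shifts by $+1$, $y$ shifts by $+1$ but with a transposition somewhere in the first few coordinates) makes the first point of disagreement visible in the final state. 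The symmetric statement (ii) is dual: here one uses that an identity of $T_k$ remains an identity under reversal only in special cases, so instead one argues directly with transformations that ``remember the last $k-1$ letters''. Concretely, take the free-monoid-like action on the $k-1$ most recent letters realized inside $T_k$ — the de Bruijn–type automaton on states indexed by length-$(\le k-1)$ suffixes — and observe that the reached state records exactly the length-$(k-1)$ suffix of the input, so differing suffixes of that length are separated.

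For (iii), fix a factor $z$ of length $k-1$ that occurs in $u$ but not in $v$ (or vice versa). I would build a dfa on at most $k$ states that ``scans for the pattern $z$'': states $0,1,\dots,k-1$, where state $j$ means ``the last $j$ read letters are the length-$j$ prefix of $z$'', with the usual Knuth–Morris–Pratt-style fallback transitions, and with state $k-1$ a sink. Reading $u$ drives this automaton into the sink (since $z$ occurs in $u$), while reading $v$ never reaches the sink, so the two final states differ, contradicting $u\equiv_k v$ via Fact~\ref{sep_ide}/the definition of identity. The point is that a single occurrence of a forbidden factor is detectable with $k$ states.

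The main obstacle is bookkeeping rather than depth: in each case one must verify that the chosen transformations genuinely lie in $T_k$ (i.e.\ use at most $k$ states) and that the final state truly encodes the discriminating data — the prefix position, the suffix, or the occurrence of the factor — with no accidental collisions. The suffix case (ii) is the most delicate, because ``remembering the last $k-1$ letters'' naively needs more than $k$ states over a binary alphabet; the fix is to use the linear (path-plus-sink) structure of the relevant automaton so that only the suffix length, not the full suffix history, must be tracked, which keeps the state count at $k-1$. Once the three witness automata are pinned down, the rest is the immediate application of the definition of an identity.
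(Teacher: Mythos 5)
Your overall strategy---refute each of (i)--(iii) by exhibiting a dfa with at most $k$ states that separates $u$ from $v$ and then invoking Fact~\ref{sep_ide}---is exactly how the paper justifies this fact (it simply displays the three separating automata in its Fig.~1), and your treatment of (iii) is correct: the KMP/Aho--Corasick automaton for a factor $z$ of length $k-1$ occurring in only one of the words, with the full-match state turned into a sink, has $|z|+1=k$ states and separates the two words.

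However, your constructions for (i) and (ii) do not work as written. For (i), a single ``shift-and-absorb'' action with one absorbing state forgets everything once both runs reach the sink: $u$ and $v$ may be far longer than $k$, so both computations end in state $k$ regardless of where the words first disagree, and the unspecified ``transposition somewhere in the first few coordinates'' does not repair this. The correct automaton (the one in the paper) follows the common prefix of length $l\le k-3$ along a path of $l+1$ states and then branches on the first differing letter into two \emph{distinct} sink states, $l+3\le k$ states in all; it is the two separate absorbing states that preserve the distinction until the end of the input. For (ii), remembering the last $k-1$ letters (a de Bruijn construction) needs exponentially many states over a binary alphabet, and your proposed fix---a ``path-plus-sink'' automaton tracking ``only the suffix length''---describes the factor-detection automaton, not a suffix detector: a sink accepts every word that merely \emph{contains} the pattern, and ``suffix length'' alone is not a well-defined thing to track. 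What is needed is the pattern-matching automaton for one \emph{fixed} word $z$, namely the length-$(k-1)$ suffix of $u$ (which, by assumption, is not a suffix of $v$): its $k$ states record the length of the longest prefix of $z$ that is a suffix of the input read so far, with KMP-style fallback transitions and no sink, so the state reached after reading the whole word is the full-match state exactly when the word ends with $z$. With these two corrections your argument coincides with the paper's.
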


We illustrate this fact with Fig.~\ref{f:nesessary}, showing the dfa's separating $u$ and $v$ in the case of violation of the conditions (i)--(iii).

\begin{figure}[!htb] 
\centerline{
\begin{tabular}{ll}
a)&\includegraphics[page=1]{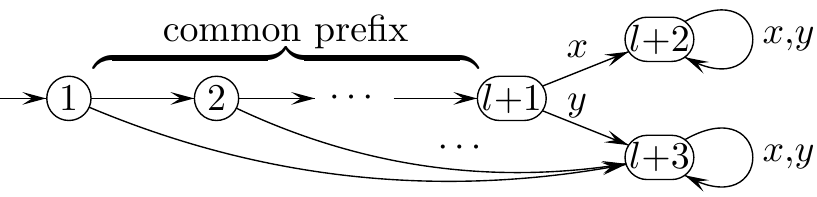}\\
b)&\includegraphics[page=2]{ShortIdentities-pics.pdf}\\
c)&\includegraphics[page=3]{ShortIdentities-pics.pdf}
\end{tabular}
}
\caption{\footnotesize Separation by prefixes, suffixes, and factors: (a) such a dfa with with $l{+}3$ states separates two words having the common prefix of length exactly $l$; (b) this example of the \emph{Aho-Corasick automaton} finishes its work in the rightmost state if and only if the input word has the suffix $xyyxxy$; such a dfa can be built for any suffix; (c) this variation of the previous automaton reaches the rightmost state if and only if the input word contains the factor $xyyxxy$; again, such a dfa can be built for any factor.}
\label{f:nesessary}
\end{figure}

Recall that, given a word $w\in\Sigma^*$ and a dfa $\cA$, $w$ can be viewed as a transformation of the set of states of $\cA$. The digraph of this transformation has one or more cycles (see an example in Fig.~\ref{f:trans}). Each such sycle is referred to as a \emph{$w$-cycle}.

\begin{figure}[!htb] 
\centerline{\includegraphics[page=4]{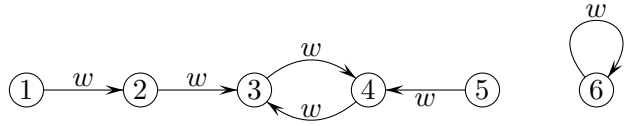}}
\caption{\footnotesize An example of the transformation of the set of states by a word.}
\label{f:trans}
\end{figure}

An identity $(u,v)$ is \emph{uniform} if $|u|=|v|$. First consider non-uniform identities. 
\begin{proposition} \label{p:nonuniform}
A unique shortest binary non-uniform irreducible identity is
\begin{equation} \label{nonuniform}
x^{k-2}yx^{k-1}\equiv_k x^{k-2+\lcm(k)}yx^{k-1}
\end{equation}
\end{proposition}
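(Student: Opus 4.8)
The plan is to prove the statement in three stages: verify that \eqref{nonuniform} is an identity of $T_k$; verify it is irreducible; and show that no non-unary non-uniform irreducible binary identity is shorter, with \eqref{nonuniform} the only one of that length.

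For the first stage I would argue inside $T_k$. Fix a map $\rho\colon\{x,y\}\to T_k$ and write $f=\rho(x)$, $g=\rho(y)$; one must show $f^{k-2}gf^{k-1}=f^{k-2+\lcm(k)}gf^{k-1}$. Let $d$ be the index of $f$, i.e.\ the least $m$ with $\mathrm{Im}(f^m)=\mathrm{Im}(f^{m+1})$. Counting the strict drops in the chain $[k]=\mathrm{Im}(f^0)\supseteq\mathrm{Im}(f^1)\supseteq\cdots$ gives $d\le k-1$, and moreover $d=k-1$ forces the eventual image $\mathrm{Im}(f^{k-1})$ to be a single point. If $d\le k-2$, then $f$ restricted to its eventual image is a permutation of at most $k$ points, whose order divides $\lcm(k)$, so $f^{k-2}=f^{k-2+\lcm(k)}$ and both sides of the claimed identity collapse to $f^{k-2}gf^{k-1}$. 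If $d=k-1$, then $f^{k-1}$ is a constant map, so $f^{k-2}gf^{k-1}$ and $f^{k-2+\lcm(k)}gf^{k-1}$ are both equal to that constant map. This exhausts the cases. The second stage is a routine check: removing any nonempty common prefix, or any nonempty common suffix, from the two words of \eqref{nonuniform} produces a pair that is not an identity of $T_k$ --- witnessed by substituting a suitable path transformation for $x$ and an appropriate map for $y$ --- so the identity is irreducible; this is the type of verification carried out for related statements in \cite{DESW11}.

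The heart of the argument is the third stage. Let $(u,v)$ be a non-unary non-uniform irreducible identity over $\{x,y\}$ with $|u|<|v|$. Evaluating the identity at specific elements of $T_k$ yields the letter-count constraints: with $\rho(y)=\mathrm{id}$ and $\rho(x)$ a $d$-cycle on $d\le k$ points, $d\mid|v|_x-|u|_x$, so $\lcm(k)\mid|v|_x-|u|_x$; with $\rho(x)$ a path transformation of index $k-1$ and $\rho(y)=\mathrm{id}$, either $|u|_x=|v|_x$ or $\min(|u|_x,|v|_x)\ge k-1$; and symmetrically for $y$. Non-uniformity makes some count differ, so after possibly renaming the letters we have $|v|_x\ge|u|_x+\lcm(k)$ and $|u|_x\ge k-1$. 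A short case check on the $y$-counts shows that unless $|v|_y=|u|_y=:t$ (with $t\ge1$ by non-unarity) the length already exceeds $2k-2+\lcm(k)$, and a similar estimate rules out $|v|_x-|u|_x\ge2\lcm(k)$; so it remains to treat $|v|_x-|u|_x=\lcm(k)$, $|v|=|u|+\lcm(k)$, and to prove $|u|\ge2k-2$ with equality only for $u=x^{k-2}yx^{k-1}$.

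This last point is where Fact~\ref{necessary} and irreducibility are brought to bear, and I expect it to be the main obstacle. Write $u=x^{a_0}yx^{a_1}y\cdots yx^{a_t}$ and align the $y$'s of $u$ with those of $v$. The common prefix of length $k-2$ pins down the leading block unless $a_0\ge k-2$; the common suffix of length $k-1$ pins down a tail of the block sequence; and the common set of length-$(k-1)$ factors heavily constrains where the $\lcm(k)$ extra $x$'s of $v$ can sit (for instance, any newly created run of $k-1$ $x$'s must already occur in $u$). One then checks, by exhibiting separating $k$-state automata, that this surplus can lie neither in a middle block nor in the final block, hence lies in the first block, forcing $a_0\ge k-2$; the suffix condition in turn forces $a_t\ge k-1$, so $|u|_x\ge(k-2)+(k-1)=2k-3$ and $|u|\ge2k-3+t\ge2k-2$. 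Equality then forces $t=1$, $a_0=k-2$, $a_1=k-1$, so $u=x^{k-2}yx^{k-1}$, after which $v=x^{k-2+\lcm(k)}yx^{k-1}$ is determined, giving precisely \eqref{nonuniform} (up to renaming the two letters).
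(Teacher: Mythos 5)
Your first stage and the counting reductions (forcing $|v|_x-|u|_x=\lcm(k)$, $|u|_y=|v|_y$, $|v|-|u|=\lcm(k)$) are sound and essentially parallel the paper's argument, just phrased with indices of transformations instead of dfa states. The gap is in the step you yourself flag as the main obstacle, and it is a real one: you propose to show, ``by exhibiting separating $k$-state automata,'' that the surplus of $\lcm(k)$ extra $x$'s cannot sit in a middle or in the final block. No such automata exist in the critical cases, because every $f\in T_k$ satisfies $f^{m}=f^{m+\lcm(k)}$ for all $m\ge k-1$; consequently pairs such as $x^{k-2}yx^{k-1}\equiv_k x^{k-2}yx^{k-1+\lcm(k)}$, or $yx^{k-1}yx^{k-3}\equiv_k yx^{k-1+\lcm(k)}yx^{k-3}$, are genuine identities of $T_k$ of exactly the length $\lcm(k)+2k-2$ in question, with the surplus in a non-initial block. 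They cannot be excluded by separation; they must be excluded because they are \emph{reducible} (stripping a common prefix or suffix takes them down to the unary identity \eqref{unary}). This is precisely how the paper argues: it uses Fact~\ref{necessary}(iii) to force the factor $x^{k-1}$ into $u$, and then shows that whenever this factor is not positioned as in \eqref{nonuniform}, the identity reduces to \eqref{unary}, contradicting irreducibility. Your sketch never invokes irreducibility at this point, so as written the exclusion step fails.

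Two smaller issues in the same part: (a) after aligning the $y$'s you treat ``the surplus'' as lying in a single block of $v$, but a priori the extra $\lcm(k)$ letters $x$ may be distributed over several blocks (some blocks of $v$ even shorter than the corresponding blocks of $u$); the paper avoids this by writing $u=zaw$, $v=zv'w$ with $z,w$ the common prefix and suffix guaranteed by Fact~\ref{necessary}(i),(ii) and analyzing $v'$ through the $y$-count, and it pins the position of the unique $y$ of $v$ only at the very end, by a modular separating-automaton argument. (b) The claim that ``the suffix condition forces $a_t\ge k-1$'' is not right: for $u=x^{k-2}yx^{a_1}$ with $a_1<k-1$ and $v=x^{k-2+\lcm(k)}yx^{a_1}$ the length-$(k-1)$ suffixes coincide; what rules this out is the factor condition (iii) (or a direct separating automaton), not the suffix condition (ii). So the overall architecture can be repaired, but the decisive uniqueness argument needs the reducibility-to-\eqref{unary} mechanism that your proposal omits.
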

\begin{proof}
First we use Fact~\ref{sep_ide} to check that \eqref{nonuniform} is an identity. Consider any binary dfa $\cA=(\{x,y\},Q,\delta,s)$, $|Q|=k$, and prove that $\cA$ does not separate the parts of  \eqref{nonuniform}. To separate them, $\cA$ should separate $x^{k-2}$ from $x^{k-2+\lcm(k)}$. If the state $s.x^{k-2}\in Q$ belongs to an $x$-cycle, no separation is possible, because the length of this cycle divides $\lcm(k)$. Hence $s.x^{k-2}$ does not belong to an $x$-cycle. Then $Q=\{s, s.x, \ldots, s.x^{k-1}\}$ and the only $x$-cycle is the loop on the state $s.x^{k-1}$. Therefore, $x^{k-1}$ acts on $Q$ as a constant, implying that $\cA$ is unable to separate the parts of \eqref{nonuniform}.

Now assume that $u\equiv_k v$ and $|u|<|v|\le \lcm(k)+2k-2$ (this number is the length of identity \eqref{nonuniform}). By Fact~\ref{sep_ide}, $\Sep(u,v)>k$. Let $|u|_x=l$, $|v|_x=l+m$, and w.l.o.g. $m>0$. 
If $m$ is not divisible by $\lcm(k)$, then some $i\le k$ does not divide $m$. In this case $u$ and $v$ are separated by the $i$-state dfa in which $y$ is the identity map and $x$ is a cyclic permutation. Therefore the restriction on the length of $v$ implies $m=\lcm(k)$. By the same argument, $|u|_y=|v|_y$. So $|v|-|u|=\lcm(k)$, as well as in \eqref{nonuniform}. In addition, $u$ and $v$ satisfy the conditions (i)--(iii) of Fact~\ref{necessary}. Let $|u|<2k-2$. Then $u$ is completely covered by its prefix from (i) and its suffix from (ii). Then all $y$'s in $v$ occur in this prefix and/or suffix. Hence $v$ contains $x^{k-1}$; by (iii), so does $u$. Let $u=zx^{k-1}w$  for some words $z,w$. Since $u$ is short, $z$ (resp., $v$) is a part of the common prefix (resp., suffix) of $u$ and $v$. So $v=zx^{k-1+\lcm(k)}w$. But this means that the identity $u\equiv_k v$ is reducible to \eqref{unary}. This contradiction proves the assumption $|u|<2k-2$ false.

Finally, let $|u|=2k-2$, $z=u[1..k{-}2]$, $a=u[k{-}1]$, $w=u[k..2k{-}2]$. Then $u=zaw$ and $v=zv'w$ for some word $v'$ of length $\lcm(k)+1$. The equality $|u|_y=|v|_y$ implies that $v'$ contains exactly one $y$ if $a=y$ and $v'=x^{\lcm(k)+1}$ otherwise. Either way, $v'$ is long enough to contain the factor $x^{k-1}$, so $u$ contains it as well. If this factor is not a suffix of $u$, then $u\equiv_k v$ is reducible to \eqref{unary} as in the previous paragraph. Hence $w=x^{k-1}$. If $v$ has the prefix $za$, then this prefix contains all $y$'s in $v$; so $u=zax^{k-1}$, $v=zax^{\lcm(k)+k-1}$, and again our identity is reducible to \eqref{unary}. Therefore $u$ begins with $zy$ and $v$ begins with $zx$ (the opposite case is impossible since $|u|_y=|v|_y$). Note that $zy$ is a factor of $v$ by Fact~\ref{necessary}(iii). Since $v$ has a unique $y$ outside its prefix $z$ (it is in $v'$), this $y$ is preceded by $z$. So $v$ has two occurrences of $z$, and they together contain the same number of $y$'s as the prefix $z$ of $u$. This is possible only if $z=x^{k{-}2}$. Thus, each of $u=x^{k-2}yx^{k-1}$ and $v$ contain a single occurrence of $y$; say, $v[l]=y$. We have $l>k-1$, because $v$ begins with $zx=x^{k-1}$. If $l$ and $k-1$ are distinct modulo $i$ for some $i\le k$, then a dfa separating $u$ and $v$ is easy to construct: an $x$-cycle of length $i$ contains the initial vertex, and the $y$-edges from $s.x^{k-1}$ and $s.x^l$ lead to the same vertex of this cycle, so that the remaining $x$'s will be read to different vertices. Therefore, $l=k-1+\lcm(k)$, implying that the identity $u\equiv_k v$ coincides with \eqref{nonuniform}.
\end{proof}

Next we switch to uniform identities. An identity $(u,v)$ is \emph{balanced} if $|u|_a=|v|_a$ for any letter $a$.

\begin{proposition} \label{p:unbal}
A unique shortest binary uniform unbalanced identity is
\begin{equation} \label{unbal}
x^{k-1+\lcm(k)}y^{k-1}\equiv_k x^{k-1}y^{k-1+\lcm(k)}
\end{equation}
\end{proposition}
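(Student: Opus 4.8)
The plan is to mirror the structure of the proof of Proposition~\ref{p:nonuniform}: first verify that \eqref{unbal} is an identity using Fact~\ref{sep_ide}, and then show that any uniform unbalanced identity $(u,v)$ of $T_k$ that is irreducible and no longer than \eqref{unbal} must in fact coincide with \eqref{unbal}.

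For the first part, I would take an arbitrary binary dfa $\cA=(\{x,y\},Q,\delta,s)$ with $|Q|=k$ and argue that it cannot separate the two sides of \eqref{unbal}. The key observation is the same as before: if $s.x^{k-1}$ lies on an $x$-cycle, then inserting $\lcm(k)$ extra $x$'s before reading $y^{k-1}$ changes nothing, since the cycle length divides $\lcm(k)$; and if $s.x^{k-1}$ does not lie on an $x$-cycle, then $Q=\{s,s.x,\dots,s.x^{k-1}\}$, so $x^{k-1}$ acts as a constant map and again the extra $x$'s are invisible. Hence $s.x^{k-1+\lcm(k)}=s.x^{k-1}$, and after applying $y^{k-1}$ we are comparing $s.x^{k-1}.y^{k-1}$ on both sides — but on the right side we read $y^{k-1+\lcm(k)}$ instead; the symmetric argument (with the roles of $x$ and $y$ swapped, starting from the state $s.x^{k-1}.y^{k-1}$) shows the extra $\lcm(k)$ copies of $y$ are also invisible. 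So $\cA$ does not separate them, and by Fact~\ref{sep_ide}, \eqref{unbal} is an identity. A small point to be careful about: one has to run the ``$x$-cycle'' dichotomy \emph{from the state reached after the prefix}, not from $s$, so I should phrase it as a statement about the transformation $x^{k-1}$ acting on $Q$ rather than about a particular orbit; this is exactly what happens implicitly in the proof of Proposition~\ref{p:nonuniform}.

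For the second part, suppose $u\equiv_k v$, the identity is irreducible, uniform ($|u|=|v|$), unbalanced, and $|u|=|v|\le 2k-2+\lcm(k)$ (the length of \eqref{unbal}). Unbalanced plus uniform means $|u|_x\ne|v|_x$ while $|u|_x+|u|_y=|v|_x+|v|_y$, so w.l.o.g.\ $|u|_x=|v|_x+m$ and $|v|_y=|u|_y+m$ with $m>0$. As in Proposition~\ref{p:nonuniform}, if some $i\le k$ does not divide $m$ we separate $u$ from $v$ by an $i$-state cyclic automaton where $x$ is an $i$-cycle and $y$ is the identity (this detects the difference in $x$-counts mod $i$); hence $\lcm(k)\mid m$, and the length bound forces $m=\lcm(k)$. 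Then I would invoke Fact~\ref{necessary}: $u,v$ share the length-$(k{-}2)$ prefix, the length-$(k{-}1)$ suffix, and the set of length-$(k{-}1)$ factors. Since $|v|_y=|u|_y+\lcm(k)\ge\lcm(k)$ is large, $v$ must contain the factor $y^{k-1}$ (it cannot avoid a run of $k-1$ equal letters while having $\ge\lcm(k)$ copies of $y$ in a word of length at most $2k-2+\lcm(k)$ — here I'd use a counting/pigeonhole bound on how $x$'s can break up the $y$'s, noting there are only $|u|_x=|v|_x+\lcm(k)$ occurrences of $x$ available and $|v|_x$ is small because $|v|=|v|_x+|v|_y\le 2k-2+\lcm(k)$ and $|v|_y\ge\lcm(k)$, giving $|v|_x\le 2k-2$; actually one needs $|v|_x$ small enough, and if it isn't, a separate sub-argument is needed). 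By Fact~\ref{necessary}(iii), $u$ also contains $y^{k-1}$; symmetrically, $u$ contains $x^{k-1}$ and hence so does $v$. From here the argument becomes a case analysis, pinning down where these runs sit using the shared prefix and suffix and the irreducibility (any shared prefix/suffix block can be stripped, reducing to \eqref{unary} or to a shorter identity), until the only surviving possibility is $u=x^{k-1+\lcm(k)}y^{k-1}$, $v=x^{k-1}y^{k-1+\lcm(k)}$. A final mod-$i$ argument, analogous to the ``$l$ vs.\ $k-1$'' step in Proposition~\ref{p:nonuniform}, rules out any other placement of the long run.

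The main obstacle will be the combinatorial case analysis in the second part: controlling the positions of the runs $x^{k-1}$ and $y^{k-1}$ and the short ``leftover'' blocks of the opposite letter, and in each configuration either constructing an explicit small separating dfa (to derive a contradiction with $\Sep(u,v)>k$) or exhibiting a reduction to a shorter identity (contradicting irreducibility). The length budget $2k-2+\lcm(k)$ is tight, so the bookkeeping has to be done carefully — in particular, establishing that $v$ (and $u$) genuinely contains the required monochromatic run of length $k-1$, rather than merely many copies of the letter, is the step I expect to require the most care, and it is where I'd spend most of the effort.
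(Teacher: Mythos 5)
Your first half is fine: the verification that \eqref{unbal} is an identity works, though it is heavier than necessary — \eqref{unbal} is simply two copies of the unary identity \eqref{unary} (one in $x$, one in $y$) multiplied together, which is the paper's one-line argument; also your second branch is vacuous, since after $k-1$ applications of $x$ the state $s.x^{k-1}$ always lies on an $x$-cycle. The opening of your uniqueness argument also matches the paper: the mod-$i$ cyclic automata together with the length bound give $|u|_x=|v|_x+\lcm(k)$ and $|v|_y=|u|_y+\lcm(k)$.

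The genuine gap is in everything after that. Your plan — first force the runs $y^{k-1}$ in $v$ and $x^{k-1}$ in $u$ by counting, then settle their positions by a case analysis — is not carried out, and the counting step you yourself flag does in fact fail: $|v|_y\ge\lcm(k)$ and $|v|_x\le 2k-2$ do not imply that $v$ contains $y^{k-1}$ (for $k=4$, twelve $y$'s can be cut by six $x$'s into blocks of length $2<k-1$), so no pigeonhole on counts alone yields the runs; and the subsequent ``case analysis \dots{} until the only surviving possibility'' is promised rather than performed, which is precisely where the uniqueness claim lives. The missing device that makes the paper's proof short is to factor out the longest common prefix $z$ and suffix $w$: writing $u=zu'w$, $v=zv'w$, Fact~\ref{necessary} gives $|z|\ge k-2$, $|w|\ge k-1$, hence $|u'|=|v'|\le\lcm(k)+1$; combined with $|u'|_x=|v'|_x+\lcm(k)$ and $|v'|_y=|u'|_y+\lcm(k)$ this leaves only $u'=x^{\lcm(k)}$, $v'=y^{\lcm(k)}$, or the boundary case $u'=x^{\lcm(k)+1}$, $v'=y^ixy^j$ (up to swapping roles), which is killed by Fact~\ref{necessary}(iii) since then $w=x^{k-1}$ and every length-$(k{-}1)$ factor of $u$ ends with $x$ while $v$ has one ending with $y$. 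Once $u'=x^{\lcm(k)}$ and $v'=y^{\lcm(k)}$, the required factors $x^{k-1}$ and $y^{k-1}$ must sit inside $z$ and $w$, whose total length is at most $2k-2$, and this pins the identity as \eqref{unbal} up to renaming the letters — no open-ended case analysis is needed. Without this decomposition (or an equivalent structural step), your proposal does not establish minimality or uniqueness.
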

\begin{proof}
Since \eqref{unbal} is obtained by multiplying two copies of \eqref{unary}, it is obviously an identity. Now consider any uniform unbalanced identity $u\equiv_k v$ of length at most $\lcm(k)+2k-2$, which is the length of  \eqref{unbal}. Similar to the proof of Proposition~\ref{p:nonuniform}, we obtain that $|u|_x>|v|_x$ implies $|u|_x=|v|_x+\lcm(k)$ and $|v|_y=|u|_y+\lcm(k)$. Let $u=zu'w$, $v=zv'w$, where $z$ (resp. $w$) is the longest common prefix (resp., suffix) of $u$ and $v$. By Fact~\ref{necessary} we have $|z|\ge k-2$, $|w|\ge k-1$, and thus $|u'|\le \lcm(k)+1$. If $|u'|=\lcm(k)+1$, we can assume $u'=x^{\lcm(k)+1}$, $v'=y^ixy^j$, where $i,j>0$ (if $u'$ contains fewer $x$'s, then $v'=y^{\lcm(k)+1}$, so we get a symmetric case). Then $x^{k-1}$ is a factor of $v$ by Fact~\ref{necessary}, implying $w=x^{k-1}$. Now all factors of $u$ of length $k-1$ end with $x$, which is not the case for $v$; again by Fact~\ref{necessary}, $u$ and $v$ cannot form an identity. Hence, $|u'|\le \lcm(k)$. So we have $u'=x^{\lcm(k)}$, $v'=y^{\lcm(k)}$. Since $x^{k-1}$ is a factor of $v$, $y^{k-1}$ is a factor of $u$, we immediately get the identity \eqref{unbal} up to renaming the letters.
\end{proof}

\begin{proposition}
Every $T_k$ satisfies the binary uniform balanced identity
\begin{equation} \label{bal}
x^{k-2+\lcm(k)}yx^{k-1}\equiv_k x^{k-2}yx^{k-1+\lcm(k)}
\end{equation}
\end{proposition}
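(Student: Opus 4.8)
The plan is to invoke Fact~\ref{sep_ide}: it suffices to show that no $k$-state dfa $\cA=(\{x,y\},Q,\delta,s)$ separates the two sides of \eqref{bal}. Write $u=x^{k-2+\lcm(k)}yx^{k-1}$ and $v=x^{k-2}yx^{k-1+\lcm(k)}$, and put $p=s.x^{k-2}$. I would split into the two cases used in the proof of Proposition~\ref{p:nonuniform}, according to whether $p$ lies on an $x$-cycle. Note that each of $u$ and $v$ is obtained from the word $x^{k-2}yx^{k-1}$ by one inserted block $x^{\lcm(k)}$ --- placed right after the prefix $x^{k-2}$ in the case of $u$, appended at the very end in the case of $v$ --- and the goal in each case is to show this block has no effect on the state reached.

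First suppose $p$ lies on an $x$-cycle. Its length is at most $k$ and hence divides $\lcm(k)$, so $p.x^{\lcm(k)}=p$; therefore $s.x^{k-2+\lcm(k)}=p$, and reading the maximal $x$-prefixes of $u$ and of $v$ from $s$ both end at $p$. After the common letter $y$ we are at $q:=p.y$, and after the block $x^{k-1}$ we are at $r:=q.x^{k-1}$, so $s.u=r$. Now the standard observation that the $x$-orbit of any state of a $k$-state automaton enters a cycle within $k-1$ steps (its pre-periodic part together with its eventual cycle use at most $k$ distinct states) shows that $r$ itself lies on an $x$-cycle; its length again divides $\lcm(k)$, so $r.x^{\lcm(k)}=r$, giving $s.v=r$. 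Hence $\cA$ does not separate $u$ and $v$.

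Next suppose $p=s.x^{k-2}$ does not lie on an $x$-cycle. Exactly as in the proof of Proposition~\ref{p:nonuniform}, the states $s,s.x,\ldots,s.x^{k-1}$ are then pairwise distinct and exhaust $Q$, the only $x$-cycle is the loop on $s.x^{k-1}$, and consequently $x^{k-1}$ acts on $Q$ as the constant map onto the fixed point $s.x^{k-1}$. Reading $u$, its final block $x^{k-1}$ sends whatever state has been reached to $s.x^{k-1}$, so $s.u=s.x^{k-1}$; reading $v$, its final block $x^{k-1+\lcm(k)}$ first sends the current state to $s.x^{k-1}$ and then keeps it fixed, so $s.v=s.x^{k-1}$ too. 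As no $k$-state dfa separates $u$ and $v$ in either case, Fact~\ref{sep_ide} gives $u\equiv_k v$. The only mildly delicate point is the uniform bound ``the $x$-orbit of any state reaches a cycle in at most $k-1$ steps'', needed in the first case to absorb the trailing $x^{\lcm(k)}$ of $v$; the remainder of the argument just reuses the machinery of Proposition~\ref{p:nonuniform}.
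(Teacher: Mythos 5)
Your proof is correct and follows the same route as the paper, which argues (via Proposition~\ref{p:nonuniform}) that for any $k$-state dfa either $s.x^{k-2}=s.x^{k-2+\lcm(k)}$ or $x^{k-1}$ is a constant map; your extra observation that the trailing $x^{\lcm(k)}$ is absorbed because any state lands on an $x$-cycle after at most $k-1$ applications of $x$ is just the unary identity \eqref{unary} spelled out, which the paper leaves implicit.
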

\begin{proof}
The same argument as in Proposition~\ref{p:nonuniform} works: for any dfa with $k$ states either $s.x^{k-2}=s.x^{k-2+\lcm(k)}$ or $x^{k-1}$ is a constant map.
\end{proof}

The summary of the proved statements is as follows: the shortest non-unary unbalanced identities in the semigroup $T_k$ have exactly the same length $\lcm(k)+2k-2$ as some binary balanced identity, and are slightly longer than the unary identity of this semigroup. The question is whether there exist shorter balanced binary identities. 

\begin{remark} \label{search4}
An exhaustive computer search reveals that identities \eqref{bal} are the shortest binary identities in the semigroups $T_k$ for $k\le 4$. For $k=5$, such a search is beyond capabilities of any computer. However, below we show that $T_5$ does have a shorter identity as well as infinitely many other semigroups $T_k$. 
\end{remark}
\begin{theorem}\label{th:sem}
Semigroup $T_k$ satisfies the following identity of length $2\lcm(k-1)+6(k-1)$:
\begin{equation} \label{shortsem}
(xy)^{k-2+\lcm(k-1)}(yx)^k(xy)^{k-1} \equiv_k (xy)^{k-2}(yx)^k(xy)^{k-1+\lcm(k-1)}
\end{equation}
\end{theorem}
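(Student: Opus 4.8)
The plan is to mimic the proof of Proposition~\ref{p:nonuniform} and argue through Fact~\ref{sep_ide}: it suffices to show that no $k$-state dfa separates the two sides of~\eqref{shortsem}. By Fact~\ref{binary} I may fix an arbitrary binary dfa $\cA=(\{x,y\},Q,\delta,s)$ with $|Q|=k$, let $u$ and $v$ be the left- and right-hand sides of~\eqref{shortsem}, and prove $s.u=s.v$. Write $a=xy$, $b=yx$, and $L=\lcm(k-1)$, so that $u=a^{k-2}\!\cdot a^{L}\cdot b^{k}a^{k-1}$ and $v=a^{k-2}b^{k}a^{k-1}\!\cdot a^{L}$. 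The whole argument then reduces to controlling how the transformation $a^{L}$ of $Q$ acts relative to the $a$-cycles.

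\emph{Case 1: $a$ is a cyclic permutation of the whole set $Q$.} If $xy$ is a bijection of $Q$, then $x$ must be injective (otherwise $xy$ is not), hence a permutation, and then $y=x^{-1}(xy)$ is a permutation too; consequently $b=yx=y(xy)y^{-1}$ is conjugate to the $k$-cycle $a$, so $b$ is itself a $k$-cycle and $b^{k}=\mathrm{id}$. Therefore $u=a^{k-2+L}a^{k-1}=a^{2k-3+L}=a^{k-2}a^{k-1+L}=v$ already as permutations, so $s.u=s.v$. This case is the point of the whole construction: the middle block $(yx)^{k}$ is inserted precisely so that it collapses to the identity in exactly the one situation ---$xy$ acting as a full $k$-cycle--- in which $a^{L}$ fails to fix the $a$-cycles pointwise.

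\emph{Case 2: $a$ is not a $k$-cycle.} Then no $a$-cycle has length $k$: a permutation of $Q$ cannot simultaneously have a cycle of length $k$ and a shorter one, and a non-permutation of a $k$-element set has at most $k-1$ states on its cycles altogether. Hence every $a$-cycle has length at most $k-1$, so it divides $L$; thus $a^{L}$ fixes every state lying on an $a$-cycle, and, since the tail of any state under $a$ has length at most $k-1$, the map $a^{k-1}$ sends all of $Q$ onto the union of the $a$-cycles. In particular $s.(a^{k-2}b^{k}a^{k-1})$ lies on an $a$-cycle, hence is fixed by $a^{L}$, so $s.v=s.(a^{k-2}b^{k}a^{k-1})$. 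Now consider $q=s.a^{k-2}$. If $q$ lies on an $a$-cycle, then $q.a^{L}=q$, so $s.a^{k-2+L}=q$ and $s.u=q.(b^{k}a^{k-1})=s.(a^{k-2}b^{k}a^{k-1})=s.v$. If $q$ does not lie on an $a$-cycle, then ---exactly as in the proof of Proposition~\ref{p:nonuniform}--- the states $s,s.a,\dots,s.a^{k-1}$ are $k$ distinct states, so $Q=\{s,s.a,\dots,s.a^{k-1}\}$, the state $s.a^{k-1}$ is a fixed point of $a$, and $a^{k-1}$ is the constant map onto $s.a^{k-1}$; reading the final block of $u$ or of $v$ therefore carries $s$ to $s.a^{k-1}$ in both words (for $v$ one also uses that $s.a^{k-1}$ is a fixed point), whence $s.u=s.v$.

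I expect the only real obstacle to be organising the case analysis so that it is visibly exhaustive ---in particular pinning down that ``not a $k$-cycle'' rules out an $a$-cycle of length $k$ even when $a$ is not a permutation, and keeping straight in Case~2 whether the relevant states sit on $a$-cycles or in their tails. Beyond that, each branch is a one-line computation, using nothing more than the orbit-structure bookkeeping already exploited for Proposition~\ref{p:nonuniform} together with the conjugacy observation that $yx$ and $xy$ have the same cycle type in the permutational case.
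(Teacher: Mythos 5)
Your proof is correct and follows essentially the same route as the paper's: a case analysis on the cycle structure of the transformation $xy$, using that all $(xy)$-cycle lengths at most $k-1$ divide $\lcm(k-1)$, that $(xy)^{k-1}$ (or, off-cycle, the constant-map argument from Proposition~\ref{p:nonuniform}) lands every state on a cycle, and that when $xy$ is a $k$-cycle so is $yx$, making $(yx)^k$ the identity. The only cosmetic differences are the ordering of the cases and the unnecessary appeal to Fact~\ref{binary}.
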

\begin{corollary}
If $k\ge 5$ is either a prime or an odd prime power, the semigroup $T_k$ satisfies an identity which is shorter than the unary identity \eqref{unary}.
\end{corollary}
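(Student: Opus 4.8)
The corollary will follow from Theorem~\ref{th:sem} by an arithmetic comparison, so I will concentrate on the theorem. When $k=p^i$ with $p$ an odd prime and $k\ge 5$, passing from $k-1$ to $k$ multiplies only the $p$-part of the least common multiple by $p$, so $\lcm(k)=p\cdot\lcm(k-1)$; hence the length $\lcm(k)+k-1$ of the unary identity \eqref{unary} exceeds the length $2\lcm(k-1)+6(k-1)$ of \eqref{shortsem} precisely when $(p-2)\lcm(k-1)>5(k-1)$, which holds for every such $k$ because $\lcm(k-1)$ grows exponentially while $5(k-1)$ is linear (the few small cases $k=5,7,9,11,\dots$ being checked directly).

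For Theorem~\ref{th:sem} the plan is to use Fact~\ref{sep_ide}: it suffices to show that no $k$-state dfa over $\{x,y\}$ separates the two sides of \eqref{shortsem}, i.e.\ that both words induce the same transformation of the state set $Q$. Put $L=\lcm(k-1)$, and let $t$ and $g$ be the transformations $q\mapsto q.xy$ and $q\mapsto q.yx$ of $Q$; then the left side of \eqref{shortsem} induces $t^{k-1}\circ g^{k}\circ t^{k-2+L}$ and the right side induces $t^{k-1+L}\circ g^{k}\circ t^{k-2}$ (rightmost factor applied first). I would then split into cases according to the structure of the $xy$-cycles, using that $xy$ acts as a permutation if and only if both $x$ and $y$ do, and in that case $yx$ is a conjugate of $xy$ (via the action of $x$), hence has the same cycle type.

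There are three cases. If $xy$ acts as a single $k$-cycle, then so does $yx$, so $(yx)^{k}$ acts as the identity and both words induce $t^{2k-3+L}$. If $xy$ acts as a permutation that is not a $k$-cycle, then every $xy$-cycle has length $\le k-1$, so $t^{L}$ is the identity; thus $t^{k-2+L}=t^{k-2}$ and $t^{k-1+L}=t^{k-1}$, and the two transformations coincide. In the remaining case $xy$ is not a permutation, so the stable image $Q_\infty:=t^{k-1}(Q)=t^{k}(Q)=\cdots$ has at most $k-1$ elements and $t$ restricts to a permutation of $Q_\infty$ whose order divides $L$; since every state reaches $Q_\infty$ within $k-1$ applications of $t$, this already yields $t^{k-1+L}=t^{k-1}$, reducing the goal to $t^{k-1}\circ g^{k}\circ t^{k-2+L}=t^{k-1}\circ g^{k}\circ t^{k-2}$. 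Here I would argue by a dichotomy: either every state reaches $Q_\infty$ within $k-2$ steps of $t$, so that $t^{k-2}(Q)\subseteq Q_\infty$ and therefore $t^{k-2+L}=t^{k-2}$; or some state has an $xy$-tail of length exactly $k-1$, in which case a counting argument forces $Q$ to consist of that tail together with a single fixed point $z$ of $t$, so $t^{k-1}$ is the constant map onto $\{z\}$ and both words induce this constant map.

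The heart of the argument is the last dichotomy: a tail of length $\ge k-1$ uses up all of $Q$ and pins $t^{k-1}$ down to a constant, while in every other situation $t^{k-2}$ already maps into the stable image, where the extra block $(xy)^{L}$ acts trivially because $L=\lcm(k-1)$ annihilates all cycles of length at most $k-1$. The only role of the central block $(yx)^{k}$ in \eqref{shortsem} is to kill the $k$-cycle case, where $(xy)^{\lcm(k-1)}$ need not act as the identity; this mirrors the use of the single letter $y$ in identity \eqref{bal}.
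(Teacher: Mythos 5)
Your proposal is correct, and the proof of Theorem~\ref{th:sem} follows essentially the same route as the paper's: analyze the cycle/tail structure of the transformation induced by $xy$, use that all $(xy)$-cycles of length $<k$ are annihilated by $\lcm(k-1)$, that a maximal tail forces $(xy)^{k-1}$ to be a constant map, and that in the $k$-cycle case $x,y$ are permutations so $(yx)^k=1$ and the two sides become graphically equal; your only difference is organizing the cases around the global transformation and maximal tail length rather than the single state $s.(xy)^{k-2}$, which is cosmetic. The arithmetic comparison $5(k-1)<(p-2)\lcm(k-1)$ for $k=p^i$ with $p$ odd correctly supplies the corollary, which the paper leaves implicit.
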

\begin{proof}[Proof of Theorem~\ref{th:sem}]
Let us take a dfa $\cA$ and consider the transformation $xy$ in it. If the state $s.(xy)^{k-2}$ does not belong to any $(xy)$-cycle, then we see, similar to Proposition~\ref{p:nonuniform}, that $(xy)^{k-1}$ is a constant map. So in this case $\cA$ does not separate the sides of \eqref{shortsem}. Assume that $s.(xy)^{k-2}$ belongs to a $(xy)$-cycle of length $m$. If $m<k$, then all $(xy)$-cycles in $\cA$ have length $<k$. Since $q.(xy)^{k-1}$ belongs to some $(xy)$-cycle for any state $q$ and the lengths of all $(xy)$-cycles divide $\lcm(k{-}1)$, both sides of \eqref{shortsem} move $s$ to the same state. Finally, let $m=k$. Then $xy$ is a permutation (namely, a cycle of length $k$), and $(xy)^k=1$. Hence $x,y$ and $yx$ are permutations, and clearly $(yx)^k=1$. Deleting $(yx)^k$ from both sides of \eqref{shortsem}, we get a graphical equality, so once again we see that $\cA$ is not separating. 
\end{proof}

\begin{conjecture} \label{c:short5}
Identity \eqref{shortsem} for $k=5$ is the shortest identity of $T_5$.
\end{conjecture}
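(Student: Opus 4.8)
The plan is to reduce the conjecture to a bounded case analysis over binary balanced identities and then to settle that analysis by combining structural constraints with a heavily pruned search, the whole difficulty being concentrated in making the ``bounded'' part genuinely feasible. Concretely, it suffices to prove that \emph{$T_5$ has no binary balanced identity of length at most $47$}: identity \eqref{shortsem} for $k=5$ has length $2\lcm(4)+6\cdot 4=48$ and is an identity of $T_5$ by Theorem~\ref{th:sem}, so the content of the conjecture is exactly that $48$ is optimal. Any identity of $T_5$ of length $\le 47$ may, by Fact~\ref{binary}, be replaced by one over $\{x,y\}$ with the same side lengths; this binary identity is not unary (the shortest irreducible unary identity \eqref{unary} has length $\lcm(5)+4=64$), not non-uniform (Proposition~\ref{p:nonuniform} gives length $\ge 68$), and not uniform-but-unbalanced (Proposition~\ref{p:unbal} gives length $\ge 68$), hence it is balanced; and since a reducible identity shortens to an irreducible one, we may assume it irreducible. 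So the task becomes to rule out every irreducible binary balanced identity of $T_5$ of length $\le 47$.

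The next step is to accumulate as many efficiently checkable necessary conditions on a candidate pair $(u,v)$ as possible, so as to cut the naive search space (size about $4^{47}$) down to something manageable. Fact~\ref{necessary} already forces a common prefix of length $3$, a common suffix of length $4$, and equality of the sets of length-$4$ factors. To these I would add the whole family of ``cyclic and near-cyclic'' test automata that recurs throughout Section~\ref{s:semi}: for each short word $p$ over $\{x,y\}$ and each $i\le 5$, the $i$-state dfa in which $p$ acts as an $i$-cycle and the complementary letter as the identity, the variants where one transition is redirected so that $p$ has a single collision, and the degenerate situation in which the prefix/suffix bounds force $p^{k-1}$ to act as a constant (as in Proposition~\ref{p:nonuniform} and Theorem~\ref{th:sem}). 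Non-separation by each such automaton is a modular or linear constraint relating the numbers of occurrences of $p$-blocks in $u$ and in $v$. My expectation --- and the crux of the structural part --- is that these constraints collectively force any short balanced identity into the shape $p^{a}\,w\,p^{b}\equiv_5 p^{a'}\,w\,p^{b'}$ for a short pivot word $p$ and a short middle word $w$, exactly the form of \eqref{shortsem}; then the length budget $47$, combined with the constant-map analysis, leaves room for nothing but \eqref{shortsem} itself (which has length $48$), giving the desired contradiction.

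What survives these conditions should then be enumerated by a breadth-first, meet-in-the-middle procedure on automaton behaviours rather than on words: grow the prefixes of $u$ and $v$ one letter at a time while recording the signature $\bigl(s.u',\,s.v'\bigr)$ over a fixed finite family $\mathcal{D}$ of $\le 5$-state dfas; partial pairs with the same signature on $\mathcal{D}$ are interchangeable, so the number of reachable signatures --- not the number of words --- bounds the computation. The linchpin is to prove that some explicit, polynomially bounded $\mathcal{D}$ (for instance, all dfas whose transition semigroup is generated by a single permutation together with a single idempotent on $5$ points, up to isomorphism) already separates every non-identity of length $\le 47$. Granting that, the search terminates, and one verifies computationally that the only surviving pair of length $\le 48$ is \eqref{shortsem} for $k=5$ and that there is none of length $\le 47$.

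The genuinely hard step is precisely this linchpin: establishing that a small fixed family of test automata detects all short non-identities, equivalently proving the structure theorem that every balanced identity of $T_5$ of length $\le 48$ has the form $p^{a}\,w\,p^{b}\equiv_5 p^{a'}\,w\,p^{b'}$. Checking one fixed candidate against all $5^{10}$ maps $\{x,y\}\to T_5$ is cheap, but the space of candidates is astronomical and the decision problem ``is $(u,v)$ an identity of $T_5$?'' is coNP-complete, so a brute-force scan is hopeless (as Remark~\ref{search4} notes); only such a structural collapse can bring the computation within reach. I expect Fact~\ref{necessary} and the $w$-cycle and constant-map arguments of this section to push strongly toward that collapse without quite completing it, which is exactly why the statement is offered as a conjecture rather than proved.
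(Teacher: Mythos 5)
This statement is a conjecture, and the paper does not prove it; it only reports partial computational support in Section~\ref{s:grou}. Your proposal likewise does not prove it, and to your credit you say so explicitly. The preliminary reduction is correct and matches the paper's framework: \eqref{shortsem} for $k=5$ has length $48$ and is an identity by Theorem~\ref{th:sem}, and any putative shorter identity can be taken binary (Fact~\ref{binary}), irreducible, and then --- since the shortest unary, non-uniform, and uniform unbalanced binary identities of $T_5$ have lengths $64$, $68$, and $68$ by \eqref{unary} and Propositions~\ref{p:nonuniform} and~\ref{p:unbal} --- uniform and balanced. So the conjecture is indeed equivalent to the nonexistence of an irreducible binary balanced identity of $T_5$ of length at most $47$.

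The genuine gap is your ``linchpin'': you need either (a) a proof that some explicit polynomially bounded family $\mathcal{D}$ of at-most-$5$-state dfas separates every non-identity pair of length $\le 47$, or (b) the structure theorem that every short balanced identity has the form $p^{a}wp^{b}\equiv_5 p^{a'}wp^{b'}$. Neither is established, and no argument in the paper points toward either; the necessary conditions of Fact~\ref{necessary} and the cyclic test automata give only a handful of modular constraints, nowhere near enough to collapse a $4^{47}$-size search space, and your proposed family (one permutation plus one idempotent as generators) visibly omits dfas whose letters are non-idempotent non-permutations, with no completeness argument available. The shapes of the known short positive identities \eqref{32:1} and those in Table~\ref{idS5} also cast doubt on the hoped-for structural collapse. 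For comparison, the paper's partial verification takes a different and cleaner (though still incomplete) route: since the longest common prefix $z$ and suffix $w$ cancel under any substitution into $S_5\le T_5$, the middle part of any identity $zuw\equiv_5 zvw$ must be a positive identity $u\cong_5 v$ of $S_5$; the authors exhaustively enumerate these up to length $33$ (Proposition~\ref{search5}) plus special forms up to length $40$, and then search over $z,w$. That reduction replaces your unproven automaton-family completeness with a group-theoretic cancellation that is actually a theorem, but it too falls short of a proof because the middle part could have length up to about $41$, beyond the exhaustively searched range --- which is precisely why the statement remains a conjecture.
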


This conjecture is partially verified by the computations described in the next section.

\section{Positive Identities in $S_k$} \label{s:grou}

The symmetric group $S_k$ satisfies the positive identity $x^{\lcm(k)}=1$ and its binary counterpart $x^{\lcm(k)}=y^{\lcm(k)}$. By the same argument, as the one used in Propositions~\ref{p:nonuniform} and~\ref{p:unbal}, these are the shortest unbalanced identities in $S_k$, so all shorter positive identities are balanced. It is known that the shortest positive identity in $S_3$ is $x^2y^2=y^2x^2$ (folklore). The shortest such identity in $S_4$ has length 11: $x^6y^2xy^2=y^2xy^2x^6$ \cite{DESW11}. We ran a computer search for the positive identities in $S_5$. Using an optimized search based on hash functions, we checked all balanced pairs $(u,v)$ of length at most 33, arriving at the following result.

\begin{proposition} \label{search5}
The shortest positive identities in $S_5$ have length 32. Up to symmetry, there are two such identities of length 32:
\begin{subequations}\label{32}
\begin{align}
(xy)(xyyx)^3(yxxy)^2(yx)(yxxy)^2&=(yxxy)^2(xy)(yxxy)^2(xyyx)^3(yx)\label{32:1}\\
(xy)^4(yx)^5(xy)^6(yx)&=(yx)(xy)^6(yx)^5(xy)^4\label{32:2}
\end{align}
\end{subequations}
Also, $S_5$ satisfies no irreducible positive identity of length 33.
\end{proposition}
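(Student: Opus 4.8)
This is a finite, fully exhaustive computer search, so the work lies in organising it to be provably complete and genuinely feasible. The plan is first to restrict to \emph{balanced} binary words: by the argument recalled at the start of this section (the same one behind Propositions~\ref{p:nonuniform} and~\ref{p:unbal}), every positive identity of $S_5$ of length less than $\lcm(5)=60$ is balanced, and a balanced pair automatically satisfies $|u|=|v|$; hence for lengths up to $33$ it suffices to range over balanced (in particular uniform) binary pairs $(u,v)$, organised by the common number of $x$'s. I would additionally quotient by the symmetry group generated by the letter swap $x\leftrightarrow y$, by reversing both words (which preserves positive identities of $S_5$, since inversion is an anti-automorphism of $S_5$), and by swapping the two sides; each orbit is then represented once, e.g.\ by its lexicographically least element.

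Correctness hinges on the verification oracle. To \emph{confirm} that a candidate $(u,v)$ is a positive identity of $S_5$, it suffices to evaluate $u$ and $v$ at one representative $(\sigma,\tau)$ from each orbit of the diagonal conjugation action of $S_5$ on $S_5\times S_5$ (there are only $161$ of them, by Burnside's lemma): as a word is a product of its letters, conjugating $\sigma$ and $\tau$ simultaneously conjugates the values of $u$ and $v$, so agreement on a representative forces agreement over its whole orbit. To \emph{refute} a candidate, a single pair $(\sigma,\tau)$ with $u(\sigma,\tau)\neq v(\sigma,\tau)$ suffices.

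Feasibility comes from fingerprinting, which is essential here rather than a mere speed-up: the number of balanced length-$32$ pairs is of order $\binom{64}{32}$, far too many to inspect individually. I would fix a handful of pairs $(\sigma_i,\tau_i)$ jointly generating $S_5$ --- and, to absorb the cheap pre-filter that $(u,v)$ must already be a positive identity of $S_2,S_3,S_4$ (each of which embeds in $S_5$), a few further pairs lying inside fixed copies of $S_4$ and $S_3$ --- and define the fingerprint of a word as the tuple of its images under all these pairs, updated in constant time as words are extended. Enumerating all balanced words of a given length, computing fingerprints, and bucketing by fingerprint, every genuine identity appears as an intra-bucket pair, because equality of fingerprints is a genuine necessary condition and nothing is ever silently discarded; with enough test pairs almost every bucket is a singleton, so only a negligible number of candidate pairs reaches the exhaustive per-orbit verification. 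Memory is controlled by handling one ``number of $x$'s'' class at a time and, if need be, splitting further by a fixed short prefix.

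Running this for $\ell=1,2,\dots,33$ gives: no identity for $\ell\le 31$; for $\ell=32$, exactly the two orbits represented by \eqref{32:1} and \eqref{32:2}; and for $\ell=33$, every identity $(u,v)$ found has either $u[1]=v[1]$ with the pair obtained by deleting the first letter of both being an identity, or the last letters of $u$ and $v$ agree, with the pair obtained by deleting those last letters being an identity --- and which length-$32$ pairs are identities is already known from the earlier output. Since a pair is reducible exactly when one of these two situations holds, every positive identity of $S_5$ of length $33$ is reducible, so there is none that is irreducible. The main obstacle is purely scale at $\ell=32,33$: the incremental fingerprinting, the $S_{\le 4}$ pre-filter, the bucketing, and the orbit canonicalisation must all be implemented carefully enough to keep time and memory bounded, and the fingerprint must be rich enough to keep the verification load light --- with the reassurance that completeness is never at risk, because being an identity of $S_5$ implies equal fingerprints no matter which test pairs are used.
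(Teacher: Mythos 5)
Your proposal is correct and matches the paper's approach: the paper likewise establishes this proposition by an exhaustive, hash-based (fingerprinting) computer search over all balanced binary pairs of length at most 33, relying on the same unbalanced-case argument from Propositions~\ref{p:nonuniform} and~\ref{p:unbal} to justify the restriction to balanced pairs. Your additional details (conjugacy-orbit verification, symmetry reduction, reducibility check at length 33) are sound refinements of the same scheme.
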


Further, we checked the identities \eqref{32} in $S_6$.

\begin{proposition}
A unique, up to symmetry, shortest positive identity of $S_6$ is \eqref{32:2}. 
\end{proposition}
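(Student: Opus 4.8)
The plan is to verify directly that identity \eqref{32:2}, rewritten explicitly as $x^4(yx)^5(xy)^6yx \cong_6 yx(xy)^6(yx)^5x^4$ after cancelling adjacent letters, is an identity of $S_6$, and then to rule out all competitors of length at most $32$ by exploiting the structure theorems available to us. Concretely, to confirm that \eqref{32:2} holds in $S_6$, I would use the same case analysis template that drove the proof of Theorem~\ref{th:sem}: fix a permutational dfa $\cA$ on $\{1,\ldots,6\}$, let $\pi=xy$ as a permutation of the six states, and split on the cycle type of $\pi$. If every $\pi$-cycle has length dividing $\lcm(5)$ then $(xy)^{\lcm(5)}$ acts as the identity, and since both sides of \eqref{32:2} differ only by a block of the form $(xy)^{\lcm(5)}$ inserted at different positions within a word built entirely from $x$ and $y$ (so the surrounding letters are the same on both sides), the two sides agree; here one must also check that $(yx)^{\lcm(5)}$ behaves correctly, using that $yx$ is conjugate to $xy$ and hence has the same cycle-length multiset. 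The only remaining case is that $\pi$ has a cycle of length $6$, i.e. $\pi$ is a $6$-cycle, so $(xy)^6=1$; then $x$ and $y$, and therefore $yx$, are permutations with $(yx)^6=1$, and after deleting the $(xy)^6$ and $(yx)^6$ blocks both sides reduce to the same word letter-for-letter. That dichotomy is exactly parallel to the argument for \eqref{shortsem}, so the ``$\cA$ does not separate'' direction should go through cleanly.

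For the converse — that no positive identity of $S_6$ has length $< 32$, and that up to symmetry \eqref{32:2} is the only one of length $32$ — I would first invoke Proposition~\ref{search5}: any positive identity of $S_6$ is in particular a positive identity of $S_5$ (because $S_5$ embeds in $S_6$), so the shortest positive identity of $S_6$ has length at least $32$, and at length exactly $32$ it must be one of the two identities \eqref{32:1}, \eqref{32:2} up to the symmetries (swapping $x\leftrightarrow y$, reversing words, and exchanging the two sides). It remains to show that \eqref{32:1} is \emph{not} an identity of $S_6$. For that I would exhibit an explicit pair of permutations $\rho(x),\rho(y)\in S_6$ witnessing that the two sides of \eqref{32:1} are unequal; equivalently, build a $6$-state permutational dfa separating the two words. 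A natural search space is permutations $x,y$ such that $xy$ is a $6$-cycle or has cycle type $(4,2)$ or $(2,2,2)$ — the cases the $\cA$-argument above would have ruled out were \eqref{32:1} an identity — so one of those cycle types must yield a witness; I would pin it down by a short finite check. Finally I would note that the length-$33$ irreducibility claim for $S_6$ again follows from the $S_5$ statement in Proposition~\ref{search5}, since a length-$33$ irreducible positive identity of $S_6$ restricts to a positive identity of $S_5$ of length $33$, and any such has been excluded.

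The main obstacle I anticipate is the case $\pi=xy$ a $6$-cycle in the forward direction: when $(xy)^6=1$ one has $(xy)^{6+\lcm(5)}=(xy)^{\lcm(5) \bmod 6}$, which is \emph{not} necessarily trivial, so the naive ``delete the $\lcm(5)$-block'' reasoning fails, and one instead has to delete the $(xy)^6$ and $(yx)^6$ factors and check that the two sides of \eqref{32:2} become graphically equal once $(xy)^6$ and $(yx)^6$ are erased — this is a genuine (if finite) combinatorial identity on words that has to be stated and checked, and it is the place where the particular shape of \eqref{32:2} (as opposed to \eqref{32:1}) matters. The second, more bookkeeping-heavy point is the symmetry reduction: one must be careful that the list of symmetries used to say ``\eqref{32:1} and \eqref{32:2} are the only length-$32$ candidates'' is complete, so that ruling out \eqref{32:1} (together with its symmetric variants, which fail simultaneously since the symmetries preserve the property of being an identity) really does leave \eqref{32:2} as the unique survivor. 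Everything else — the embedding $S_5\hookrightarrow S_6$, the cycle-type dichotomy, the explicit separating automaton for \eqref{32:1} — is routine once set up.
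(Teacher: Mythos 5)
Your overall reduction is the right one and matches what the paper implicitly does: since $S_5$ embeds in $S_6$, every positive identity of $S_6$ is a positive identity of $S_5$, so by Proposition~\ref{search5} nothing shorter than $32$ can hold in $S_6$ and any length-$32$ identity must, up to symmetry, be \eqref{32:1} or \eqref{32:2}; it then remains to check (as the paper did, by machine) that \eqref{32:2} holds in $S_6$ while \eqref{32:1} fails, the latter by exhibiting a witness pair in $S_6$.

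The genuine gap is in your verification that \eqref{32:2} holds in $S_6$: you have misread the identity. First, the ``rewriting after cancelling adjacent letters'' is invalid — neither in the free semigroup nor in $S_6$ can you cancel the $yy$ at a block junction, and your rewritten word even has a different length ($28$ instead of $32$). More importantly, \eqref{32:2} is $(xy)^4(yx)^5(xy)^6(yx)=(yx)(xy)^6(yx)^5(xy)^4$: it contains no block $(xy)^{\lcm(5)}$ and no exponent $6+\lcm(5)$ (the whole word has length $32<\lcm(5)=60$), so your main case — ``all $(xy)$-cycles have length dividing $\lcm(5)$, hence $(xy)^{\lcm(5)}=1$ and the two sides differ only by where that block is inserted'' — does not apply to this identity at all; the two sides are block-reversals of each other, not insertions of a trivial power. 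That case (common order of $xy$ and $yx$ equal to $2,3,4,5$) is exactly where the content lies, and your plan leaves it unproved, while the $6$-cycle case you flag as the obstacle is in fact the easy one (if $(xy)^6=(yx)^6=1$, both sides collapse to $(xy)^4$). The correct finite argument is a case split on the common order $d\in\{1,\ldots,6\}$ of $p=xy$ and $q=yx$: writing the sides as $p^4q^5p^6q$ and $qp^6q^5p^4$, one checks each $d$ (e.g.\ $d=4$ gives $qp^2q$ on both sides, $d=3$ and $d=5$ give $p$ and $q$ respectively, $d=2$ gives $1$); this is precisely an instance of Proposition~\ref{p:abcd} with $(a,b,c,d)=(4,5,6,1)$, or else one simply verifies over all pairs in $S_6\times S_6$ as the paper does. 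With that replacement (and a full, rather than heuristically restricted, search for the counterexample to \eqref{32:1} in $S_6$), your argument goes through.
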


Naturally enough, \eqref{32:2} is not an identity in $S_7$: these words are separated by a dfa in which $xy$ and $yx$ are different cycles of length 7. Hence, the function $\Sepp(n)$ never takes the value 6:

\begin{proposition}
One has $\Sepp(1)=2,\Sepp(2)=\Sepp(3)=3,\Sepp(4)=\ldots=\Sepp(10)=4,\Sepp(11)=\ldots=\Sepp(31)=5,\Sepp(32)=\Sepp(33)=7$. 
\end{proposition}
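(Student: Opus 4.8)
The plan is to recast the statement as one about the minimum lengths of positive identities in the groups $S_k$, and then read off the answer from the identities and non-identities already established in this section. For $k\ge 1$ let $L(k)$ denote the minimum length of a nontrivial positive identity of $S_k$; thus $L(1)=1$, since in the trivial group every pair of distinct words (for instance $(x,y)$) is a positive identity. By the analog of Fact~\ref{sep_ide} for permutational automata, $\Sepp(u,v)>k$ if and only if $(u,v)$ is a positive identity of $S_k$, so $\Sepp(n)=1+\max\{k:L(k)\le n\}$. Moreover $S_k$ embeds into $S_{k+1}$, so every positive identity of $S_{k+1}$ is also one of $S_k$; hence $L$ is non-decreasing and the jumps of $\Sepp$ occur exactly at the values $n=L(k)$.

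The next step is to pin down the small values $L(1)=1$, $L(2)=2$, $L(3)=4$, $L(4)=11$, $L(5)=32$, $L(6)=32$. Here $L(2)=2$ since $S_2$ is abelian (so $xy\cong_2 yx$) while no pair of length $\le 1$ with distinct sides is a positive identity of $S_2$ (such a pair is killed by a substitution sending a letter to a transposition); $L(3)=4$ because the shortest positive identity of $S_3$ is $x^2y^2\cong_3 y^2x^2$ (folklore); $L(4)=11$ because the shortest positive identity of $S_4$ is $x^6y^2xy^2\cong_4 y^2xy^2x^6$ \cite{DESW11}; $L(5)=32$ by Proposition~\ref{search5}; and $L(6)=32$ by the preceding proposition, as \eqref{32:2} has length $32$ and is the (unique up to symmetry) shortest positive identity of $S_6$. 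Granting in addition the bound $L(7)\ge 34$ (proved below), the asserted values of $\Sepp$ follow by locating $n$ in the chain $1=L(1)<2=L(2)<4=L(3)<11=L(4)<32=L(5)=L(6)<34\le L(7)$: for $n$ in the interval $[L(k),L(k+1))$ one has $\Sepp(n)=k+1$, which yields $\Sepp(1)=2$, $\Sepp(2)=\Sepp(3)=3$, $\Sepp(4)=\ldots=\Sepp(10)=4$, $\Sepp(11)=\ldots=\Sepp(31)=5$, and $\Sepp(32)=\Sepp(33)=7$; the value $6$ is skipped precisely because $L(6)=L(5)$.

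The one step requiring real work --- and the main obstacle --- is the bound $L(7)\ge 34$, i.e.\ that $S_7$ has no nontrivial positive identity of length $\le 33$; the short explicit witnesses do not reach $S_7$, and a direct search over length $33$ is already infeasible for $S_5$. I would argue as follows. Suppose $(u,v)$, with $u\ne v$, is a positive identity of $S_7$ of length $\le 33$; it is in particular a positive identity of $S_5$. Since the shortest unbalanced positive identity of $S_5$ has length $\lcm(5)=60$ (by the argument of Propositions~\ref{p:nonuniform} and~\ref{p:unbal}), $(u,v)$ must be balanced --- in particular non-unary --- so $|u|=|v|\le 33$, and by the standard reduction to two letters (the analog of Fact~\ref{binary}) we may take $(u,v)$ to be over $\{x,y\}$. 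By Proposition~\ref{search5}, $S_5$ has no positive identity of length $\le 31$, no \emph{irreducible} positive identity of length $33$, and, up to symmetry, exactly the two positive identities \eqref{32:1}, \eqref{32:2} of length $32$. Hence $(u,v)$ is, up to symmetry, either \eqref{32:1} or \eqref{32:2}, or else --- when it has length $33$ --- a pair $(wu',wv')$ or $(u'w,v'w)$ in which $w$ is a single letter and $(u',v')$ is, up to symmetry, \eqref{32:1} or \eqref{32:2}. But \eqref{32:1} is not a positive identity of $S_6$ (the preceding proposition singles out \eqref{32:2} as the unique shortest one), hence not of $S_7$; \eqref{32:2} is not a positive identity of $S_7$ (its sides are separated by a permutational dfa realizing $xy$ and $yx$ as distinct $7$-cycles, as noted above); and since a group admits left and right cancellation, a one-letter extension of $(u',v')$ is a positive identity of $S_7$ if and only if $(u',v')$ is. Therefore none of the listed candidates is a positive identity of $S_7$, a contradiction. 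This establishes $L(7)\ge 34$, and assembling the pieces completes the proof.
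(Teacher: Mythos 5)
Your proof is correct and follows essentially the same route the paper intends: the proposition is stated there without a separate proof, as an immediate consequence of the computed shortest positive identity lengths in $S_3,\dots,S_6$ and the failure of \eqref{32:2} in $S_7$. Your reconstruction --- monotonicity of the minimal identity length across $S_k$, the reduction to balanced binary pairs, and the length-33 case handled via the ``no irreducible identity of length 33'' clause of Proposition~\ref{search5} together with group cancellation --- is exactly the implicit argument, carefully filled in.
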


\begin{proposition}
One has $\Sep(1)=\Sep(2)=2,\Sep(3)=\ldots=\Sep(7)=3,\Sep(8)=\ldots=\Sep(14)=4,\Sep(15)=\ldots=\Sep(40)=5,\Sep(48)>5$. 
\end{proposition}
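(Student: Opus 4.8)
The plan is to establish each equality and inequality separately, combining the structural necessary conditions of Fact~\ref{necessary}, the known short identities from Propositions~\ref{p:nonuniform}--\ref{p:unbal} and Theorem~\ref{th:sem}, and targeted constructions of small separating dfa's. First I would pin down the initial values. For $\Sep(1)=\Sep(2)=2$, note that any two distinct words of length $\le 2$ are separated by a $2$-state dfa, and no $1$-state dfa separates anything; by Fact~\ref{sep_ide} this amounts to checking there is no identity of $T_1$ among such short words (immediate) and that $T_2$ has none of length $\le 2$. Each subsequent jump of $\Sep$ is located by two facts: an upper witness (an identity of $T_k$ of the stated length, forcing $\Sep > k$ up to that length) and a lower witness (a binary dfa on $k$ states separating two words just past that length). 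For the value $5$ on the range $15\le n\le 40$, the natural upper witness is the balanced identity \eqref{bal} for $k=5$, which has length $\lcm(5)+2\cdot5-2 = 60+8 = 68$ — far too long — so instead one must invoke the shorter identity \eqref{shortsem} for $k=5$, of length $2\lcm(4)+6\cdot4 = 24+24 = 48$, together with an exhaustive computer search confirming that $48$ is in fact the length of the shortest identity of $T_5$ and that no identity of $T_5$ has length between, say, $41$ and $47$; this is exactly the content flagged in Remark~\ref{search4} and Conjecture~\ref{c:short5}. Symmetrically, one needs a $4$-state dfa separating some pair of words of length $15$, which establishes $\Sep(14)=4$ and $\Sep(15)\ge 5$.

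Next I would handle the lower bounds $\Sep(n)\ge k{+}1$ on each interval by exhibiting explicit separating dfa's. For the endpoints $\Sep(8)\ge4$, $\Sep(15)\ge5$, $\Sep(48)>5$, the cleanest route uses the constructions implicit in Fact~\ref{necessary} and the unary-type separators: two words differing in a letter-count by an amount not divisible by some $i\le k{+}1$ are separated by an $i$-cycle dfa, and two words whose length-$k$ prefixes or suffixes or factor sets differ are separated by the dfa's of Fig.~\ref{f:nesessary}. Concretely, for $\Sep(48)>5$ I would take the two sides of \eqref{shortsem} for $k=5$ (length $48$) — these are \emph{not} separated by any $5$-state dfa by Theorem~\ref{th:sem} — hence $\Sep$ of that pair is $\ge 6$, giving $\Sep(48)>5$. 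For the upper side of the claim, that $\Sep(14)=4$ (not $5$), I would need that \emph{every} pair of words of length $\le14$ is separated by a $4$-state dfa, i.e.\ $T_4$ has no identity of length $\le14$; since by Remark~\ref{search4} identity \eqref{bal} of length $\lcm(4)+6 = 12+6 = 18$ is the shortest identity of $T_4$, any pair of length $\le 17$ is separated by $4$ states, which more than covers $n\le14$. Likewise $\Sep(7)=3$ follows because the shortest identity of $T_3$ has length $\lcm(3)+4 = 6+4 = 10 > 7$, and $\Sep(8)\ge4$ because that length-$10$ identity pair needs $4$ states — wait, that only shows $\Sep(10)>3$; to get $\Sep(8)\ge4$ and $\Sep(8)=4$ one needs the precise statement, from computer search, that $T_3$ has an identity of length exactly $8$ and none shorter. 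So the boundaries at $7|8$ and at $14|15$ both rest on small exhaustive searches identifying the shortest identities of $T_3$ and $T_4$.

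The main obstacle is therefore \emph{not} any single clever argument but the reliance on exhaustive computation at the three transition points $n=8$, $n=15$, and the confirmation that nothing shorter than length $48$ works for $T_5$ in the relevant range. The analytic tools at hand — Fact~\ref{necessary}, Propositions~\ref{p:nonuniform}--\ref{p:unbal}, and Theorem~\ref{th:sem} — pin down the \emph{lengths of the shortest unbalanced and unary-derived identities} exactly, and Theorem~\ref{th:sem} supplies the length-$48$ balanced identity that proves $\Sep(n)>5$ for all $n\ge 48$; but they do not by themselves rule out a shorter \emph{balanced} identity of $T_3$, $T_4$, or $T_5$, and the exact transition values of $\Sep$ depend on knowing those shortest balanced identities. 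Thus the proof structure I would write is: (1) cite Propositions~\ref{p:nonuniform}, \ref{p:unbal} and the formula for \eqref{bal} to get upper bounds on shortest identities of $T_3,T_4$; (2) cite the exhaustive search (Remark~\ref{search4}) for the exact shortest identities of $T_3$ (length $8$), $T_4$ (length $18$, via \eqref{bal}) and for $T_5$ having no identity of length $<48$; (3) use Theorem~\ref{th:sem} for the length-$48$ identity of $T_5$; (4) assemble via Fact~\ref{sep_ide}: $\Sep(n)=\max\{k+1 : T_k$ has no identity of length $\le n\}+1$ is constant on each maximal interval between consecutive shortest-identity lengths, and read off the stated ranges. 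The gap written as "$\Sep(40)=5$, $\Sep(48)>5$" (rather than $\Sep(41)>5$) precisely reflects that no identity of $T_5$ is known — or exists, by the search — of length strictly between $41$ and $47$, so the exact transition point in $[41,48]$ is left open, consistent with Conjecture~\ref{c:short5}.
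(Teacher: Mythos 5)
There is a genuine gap, and it sits exactly at the hardest part of the statement, the range $15\le n\le 40$. You justify $\Sep(n)\le 5$ there by invoking ``an exhaustive computer search confirming that $48$ is the length of the shortest identity of $T_5$'' and that $T_5$ has no identity of length $<48$; but the paper explicitly states in Remark~\ref{search4} that an exhaustive search over identities of $T_5$ is computationally infeasible, and the minimality of \eqref{shortsem} for $k=5$ is only Conjecture~\ref{c:short5}, not a proven fact you may cite. The argument that actually closes this range is a reduction you do not mention: if $u\equiv_5 v$, then by Fact~\ref{necessary} the two sides share a prefix of length $3$ and a suffix of length $4$, and stripping the longest common prefix and suffix leaves an irreducible \emph{positive} identity of $S_5$ of length at most $40-7=33$. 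The feasible search is the one over $S_5$ (Proposition~\ref{search5}): the only irreducible positive identities of length $\le 33$ are the two length-$32$ identities \eqref{32}, and a direct check shows neither can be padded by a common prefix/suffix of total length $7$ or $8$ to give an identity of $T_5$ of length $39$ or $40$. Hence $T_5$ has no identity of length $\le 40$, which is all the proposition claims (the interval $41$--$47$ is deliberately left open), while $\Sep(48)>5$ comes from Theorem~\ref{th:sem} as you correctly say.

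A second, smaller error: you take the shortest identity of $T_4$ to be \eqref{bal} of length $18$. Remark~\ref{search4} says \eqref{bal} is the shortest \emph{binary} identity for $k\le 4$; since \eqref{bal} is longer than the unary identity \eqref{unary}, the overall shortest identity of $T_4$ is $x^{3}\equiv_4 x^{15}$ of length $\lcm(4)+3=15$. Your version (``no identity of $T_4$ of length $\le 17$'') would give $\Sep(15)=\Sep(16)=\Sep(17)=4$, contradicting the very statement you are proving; the lower bound $\Sep(15)>4$ needs precisely the length-$15$ unary identity of $T_4$, not a $4$-state separating dfa as you write. The analogous boundary at $n=8$ is handled by the length-$8$ unary identity $x^{2}\equiv_3 x^{8}$ of $T_3$ together with the search of Remark~\ref{search4} confirming nothing shorter exists for $k\le 4$; that part of your plan is essentially right once the unary identities are used as the witnesses.
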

\begin{proof}
Since identity \eqref{bal} is longer than \eqref{unary}, Remark~\ref{search4} implies the values of $\Sep$ up to $n=14$ and the fact that $\Sep(15)>4$.

Let $u\equiv_5 v$. Then $u\cong_5 v$ and, by Fact~\ref{necessary}, $u$ and $v$ have a common prefix of length 3 and a common suffix of length 4. A direct check shows that the identities \eqref{32} cannot produce an identity in $T_5$ of length 39 or 40, so $\Sep(n)$ equals 5 for $n=15,\ldots,40$ by Proposition~\ref{search5}. The last result follows from  Theorem~\ref{th:sem}.
\end{proof}

Identities \eqref{32} possess interesting properties. First, in both cases $u,v\in\{xy,yx\}^*$. Second, \eqref{32:1} is a palindrome ($v$ is the reversal of $u$), while \eqref{32:2} is a palindrome if considered over $\{xy,yx\}$. Having observed this, we performed a further search for identities in $S_5$ up to length 40, examining all pairs $(u,v)$ such that either $u,v\in\{xy,yx\}^*$ or $v$ is the reversal of $u$. The search revealed eight more identities; they are presented in Table~\ref{idS5}. Note that some of them hold in $S_6$ but none holds in $S_7$.

\begin{table}[!htb]
\caption{More short positive identities in $S_5$.}
\label{idS5}
\tabcolsep=2pt
\begin{tabular}{|c|c|c|c|c|}
\hline
no.&$|u|$&Identity&Type&Hold in $S_6$?\\
\hline
1&34&$(xy)^{12}(yx)^5 = (yx)^5(xy)^{12}$&$\{xy,yx\}$-pal.&Yes\\ 
2&38&$(xy)^4(yx)^5(xy)^6(yx)(xy)^2(yx) = (yx)(xy)^2(yx)(xy)^6(yx)^5(xy)^4$&$\{xy,yx\}$-pal.&Yes\\
3&38&$(xy)^2(yx)^3(xyyx)^2(xy)^2(yxxy)^2(xyyx)^2 = $&&\\
&&$(yxxy)^2(xyyx)^2(xy)^2(yxxy)^2(yx)^3(xy)^2$&$\{xy,yx\}$-pal.&No\\
4&39&$(x^2y^2)^2y(x^2y^2)^4x^2y(x^2y^2)^2x^2y = yx^2(y^2x^2)^2yx^2(y^2x^2)^4y(y^2x^2)^2$&palindrome&No\\
5&39&$(x^2y^2)^3y(x^2y^2)^4x^2y(x^2y^2)x^2y = yx^2(y^2x^2)yx^2(y^2x^2)^4y(y^2x^2)^3$&palindrome&No\\
6&40&$(xyyx)^3(yxxy)^5(xyyx)^2 = (yxxy)^2(xyyx)^5(yxxy)^3$&$\{xy,yx\}$-pal.&No\\
7&40&$(xy)^6(yx)^{10}(xy)^4 = (yx)^4(xy)^{10}(yx)^6$&palindrome&Yes\\
8&40&$(x^2y^2)^3(y^2x^2)^5(x^2y^2)^2 = (y^2x^2)^2(x^2y^2)^5(y^2x^2)^3$&palindrome&No\\
\hline
\end{tabular}
\end{table}

Note that if $zuw\equiv_k zvw$, where $z$ (resp., $w$) is the longest common prefix (resp., suffix) of both sides, then $u\cong_k v$. So, the search for the identities in $T_5$ can be performed by iterating over the identities of $S_5$, using an exhaustive search for the candidates for $z$ and $w$. Such a search, based on the identities listed in \eqref{32} and Table~\ref{idS5}, gave us exactly one identity of $T_5$, namely, the identity \eqref{shortsem} for $k=5$, that has length 48. The result of this search supports  Conjecture~\ref{c:short5}.

\medskip
The analisys of the identities listed in \eqref{32} and Table~\ref{idS5} results in finding some general classes of identities in $S_k$. The simplest class, described in the following proposition, allows us to move up the lower bound on the function $\Sepp$ by a multiplicative constant.

\begin{proposition} \label{p:ab}
Let $a,b$ be such that the order of any element of $S_k$ divides either $a$ or $b$. Then 
\begin{equation} \label{e:ab}
(xy)^a(yx)^b\cong_k (yx)^b(xy)^a\,.
\end{equation}
\end{proposition}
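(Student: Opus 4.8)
The plan is to use Fact~\ref{sep_ide}: it suffices to show that no dfa $\cA$ with $k$ states separates the two sides of \eqref{e:ab}. Fix such a dfa $\cA=(\{x,y\},Q,\delta,s)$ with $|Q|=k$, and consider the two transformations $\alpha=xy$ and $\beta=yx$ of $Q$. The key observation is that $\alpha$ and $\beta$ are ``almost inverse'' in a structural sense: since $xy$ and $yx$ are conjugate as transformations (one is $x\beta x^{-1}$-like, or more precisely $\alpha = x\cdot(yx)\cdot$ something), they have the same cycle structure, and the restriction of $x$ to the union of $\beta$-cycles is a bijection onto the union of $\alpha$-cycles mapping $\beta$-cycles to $\alpha$-cycles of equal length. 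The heart of the argument is then a case analysis on the lengths of the $\alpha$-cycle containing $s.\alpha^a$, exactly mirroring the proof of Theorem~\ref{th:sem}.

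First I would reduce to the ``permutation'' case. If the state $s.(xy)^a$ does not lie on any $(xy)$-cycle — equivalently $xy$ is not eventually periodic at $s$ with period surviving $a$ steps — one argues as in Proposition~\ref{p:nonuniform}/Theorem~\ref{th:sem}: the only way a non-cyclic $x$-type behaviour can persist is if $(xy)^a$ already collapses $Q$ onto a single $\alpha$-cycle, and since $a$ is a multiple of the order of every element of $S_k$ that acts as a permutation on such a cycle, $(xy)^a$ acts as identity-to-constant in the relevant way; in any case $s.(xy)^a = s.(xy)^{a'}$ for the two exponents appearing on the two sides, because both sides have the same total structure. (I'd need to be a little careful here: on the left side the first block is $(xy)^a$ and on the right it is $(xy)^a$ as well — wait, the two sides of \eqref{e:ab} are $(xy)^a(yx)^b$ and $(yx)^b(xy)^a$, so this is a genuine commutation identity, not an exponent-shift identity. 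So the reduction is: I must show $\alpha^a\beta^b = \beta^b\alpha^a$ as transformations of $Q$.)

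So the real content is: \emph{any} transformation $\alpha$ of a $k$-set satisfies $\alpha^a$ = some idempotent-times-permutation whose permutation part has order dividing $a$, hence $\alpha^a$ ``factors through'' the union of its cycles and acts there as a permutation of order dividing $a$; similarly $\beta^b$ acts as a permutation of order dividing $b$ on the union of $\beta$-cycles. Because the orders of elements of $S_k$ — in particular the cycle-permutation induced by $\alpha$ on its cyclic part, which is an element of $S_{k'}\le S_k$ — all divide $a$ or $b$ by hypothesis, we can split: either $\alpha^a$ is the constant-like collapse described above (cycle too short / non-existent, already handled), or the $\alpha$-cycle through $s.\alpha^a$ has length $\ge$ something forcing $\alpha$ to be a genuine permutation of $Q$, whence $\beta=yx$ is also a permutation of $Q$; in that case $\alpha^a$ and $\beta^b$ both being well-defined powers of permutations, and $\alpha,\beta$ conjugate (via $x$: indeed $x^{-1}\alpha x = x^{-1}(xy)x = yx = \beta$ when $x$ is invertible), we get $\alpha^a = x\beta^a x^{-1}$ and so the orders of $\alpha$ and $\beta$ coincide and both divide $a$ or $b$; one then checks $\alpha^a\beta^b=\beta^b\alpha^a$ directly: in the subgroup $\langle\alpha,\beta\rangle$, if $\mathrm{ord}(\alpha)\mid a$ then $\alpha^a=1$ and both sides equal $\beta^b$; if instead $\mathrm{ord}(\alpha)\mid b$, then since $\mathrm{ord}(\beta)=\mathrm{ord}(\alpha)\mid b$ we get $\beta^b=1$ and both sides equal $\alpha^a$; the remaining possibility is $\mathrm{ord}(\alpha)\mid a$ and separately handled. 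Thus in all sub-cases the two sides agree, so $\cA$ does not separate them, and by Fact~\ref{sep_ide} \eqref{e:ab} is an identity of $S_k$ (indeed of $T_k$-dfa's restricted appropriately).

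The main obstacle I anticipate is the bookkeeping in the non-permutation case: making precise why, when the $(xy)$-cycle through $s.(xy)^a$ is short (or absent), the block $(xy)^a$ already acts the same way as it does after being followed or preceded by $(yx)^b$ — i.e.\ that $(yx)^b$ does not ``move mass out of'' the eventual image of $(xy)^a$ in a way that breaks commutation. The clean way is probably to observe that $(yx)^b$, restricted to the range of a high power of $xy$, is a permutation of order dividing $\lcm(k-1)\mid \mathrm{lcm}$ of possible cycle lengths, hence dividing $a$ or $b$ by hypothesis, and then run the same dichotomy one level down; alternatively, invoke the general fact that for transformations $\alpha,\beta$ with $\beta$ a permutation-on-its-cycles and $\alpha$ a permutation-on-its-cycles with the cyclic parts interleaved via the bijection $x$, the eventual images coincide. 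I would present it using the $w$-cycle terminology already set up before Theorem~\ref{th:sem}, keeping the case split identical to that proof so the reader sees it as a direct generalization: case $m<k$ (all $(xy)$-cycles short, lengths divide $\lcm(k-1)$, hence divide $a$ or $b$), and case $m=k$ ($xy$ a $k$-cycle, so $x,y,yx$ all permutations, reducing to the group-theoretic computation above).
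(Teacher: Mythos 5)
Your core group-theoretic observation is exactly the paper's proof: for $x,y\in S_k$ the elements $xy$ and $yx$ are conjugate ($yx=x^{-1}(xy)x$), hence have the same order $q$; by hypothesis $q\mid a$ or $q\mid b$, so one of the blocks $(xy)^a$, $(yx)^b$ equals $1$ and both sides of \eqref{e:ab} collapse to the same element (there is no ``remaining possibility\ldots separately handled''---the two cases $q\mid a$ and $q\mid b$ exhaust everything). Had you written just that, you would be done, because $\cong_k$ means an identity of the symmetric group $S_k$: you only ever evaluate $x,y$ at permutations (equivalently, you only need to defeat \emph{permutational} dfa's, the $\Sepp$ setting), so Fact~\ref{sep_ide} and the whole $T_k$/general-dfa machinery are not needed.

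The genuine gap is that the framework you actually set up---``show that no dfa $\cA$ with $k$ states separates the two sides,'' with a case analysis over $(xy)$-cycles of a general transformation and a ``non-permutation case'' that you admit is the main obstacle---is not just harder bookkeeping, it is provably impossible. The pair \eqref{e:ab} is never an identity of $T_k$ for $k\ge 2$: the left side ends in $x$ and the right side ends in $y$, so they violate condition (ii) of Fact~\ref{necessary}; concretely, the $2$-state dfa in which $x$ and $y$ are the two distinct constant maps separates them regardless of $a$ and $b$ (your own worry that $(yx)^b$ might ``move mass out of the eventual image of $(xy)^a$'' is exactly what happens). So the sketched reduction of the non-permutation case cannot be repaired; the proof must be restricted to permutations from the start, at which point it is the two-line argument above.
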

\begin{proof}
For any $x,y\in S_k$ the elements $(xy)$ and $(yx)$ have the same order. Then by the choice of $a,b$ either $(xy)^a=1$ or $(yx)^b=1$, implying the result.
\end{proof}

\begin{theorem} \label{t:ab}
The symmetric group $S_k$ satisfies a positive identity \eqref{e:ab} of length $e^{\frac 23 k+O(\frac{k}{\log k})}$. 
\end{theorem}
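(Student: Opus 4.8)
The plan is to apply Proposition~\ref{p:ab}, so the whole task reduces to choosing integers $a,b$ that together ``cover'' all element orders in $S_k$ while keeping $\max(a,b)$ as small as possible. The set of element orders of $S_k$ is exactly the set of least common multiples $\lcm(\lambda_1,\dots,\lambda_r)$ over all partitions $\lambda_1+\dots+\lambda_r=k$; the maximal such value is Landau's function $g(k)$, whose logarithm is $\log g(k)=\sqrt{k\log k}\,(1+o(1))$ (see \cite{Lan03}), but we cannot simply take $a=b=g(k)$ because that gives identity length $2g(k)$, and $\log(2g(k))=\sqrt{k\log k}+o(\sqrt{k\log k})$ is far bigger than $e^{\frac23 k+O(k/\log k)}$ would suggest --- wait, in fact $e^{\frac23 k+\cdots}$ is \emph{much larger} than $g(k)$, so the bound we must prove is weak and should follow easily. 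Indeed it suffices to exhibit \emph{any} valid pair with $\log\max(a,b)\le \frac23 k+O(k/\log k)$.

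So here is the concrete construction I would carry out. Split the primes up to $k$ into ``small'' and ``large'' at the threshold $k/2$. For a prime $p\le k$, the largest power $p^j\le k$ divides the order of some permutation (a single $p^j$-cycle), so $\lcm(k)=\prod_{p\le k} p^{\lfloor\log_p k\rfloor}$ is achieved as an element order, and every element order divides $\lcm(k)$. Now observe: a permutation of $\{1,\dots,k\}$ whose order is divisible by two distinct primes $p,q>k/2$ is impossible, since it would need disjoint cycles of lengths $\ge p$ and $\ge q$, totalling $>k$. Hence for each permutation $\sigma$, its order is divisible by \emph{at most one} prime exceeding $k/2$, and when such a prime $p$ divides $\mathrm{ord}(\sigma)$ it does so to the first power only (as $p^2>k$). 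Therefore $\mathrm{ord}(\sigma)$ divides $N\cdot p$ for a single large prime $p$, where $N=\prod_{p\le k/2}p^{\lfloor\log_p k\rfloor}$ is the ``small-prime part'' of $\lcm(k)$.

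The pair $(a,b)$ is then built as follows: set $b = N\cdot\prod_{k/2<p\le k}p$ — no, that makes $b$ too large for nothing; instead I want only \emph{two} numbers. Take $a=b$ impossible to beat the trivial bound, so split differently: let $P=\prod_{k/2<p\le k}p$ be the product of large primes and $N$ as above. Put $a=N\cdot P$? That is just $\lcm(k)$ up to the small correction and has $\log a \le (1+o(1))k$ by the Prime Number Theorem, but $\frac23 k$ is smaller than $k$, so this still does not meet the claimed exponent. The resolution must be that \emph{two} numbers genuinely help: assign the large primes $k/2<p\le k$ to $a$ if $p$ lies in, say, $(k/2,2k/3]$ and to $b$ if $p\in(2k/3,k]$; but a permutation can use a large prime from each interval simultaneously only if the two cycle lengths sum to at most $k$, which for $p_1>k/2$ and $p_2>k/2$ is impossible — good, so no $\sigma$ needs a large prime from both halves at once. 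Hence set
\[
a \;=\; N\!\!\prod_{k/2<p\le 2k/3}\!\! p, \qquad b \;=\; N\!\!\prod_{2k/3<p\le k}\!\! p .
\]
Then every element order divides $a$ or $b$, and by the Prime Number Theorem $\log a = \log N + \sum_{k/2<p\le 2k/3}\log p = O(k/\log k) + (\frac{2k}{3}-\frac{k}{2}) + o(k) = \frac{k}{6}+o(k)$, and similarly $\log b=\frac{k}{3}+o(k)$; the identity \eqref{e:ab} then has length $a+2b+a = O(\max(a,b)) = e^{\frac{k}{3}+o(k)}$, which is even stronger than claimed --- so in the write-up I would instead partition the large primes to balance the exponents at $\frac23 k$ only if a weaker, more robust bound is wanted, but a cleaner choice (small primes on both, all large primes on one side) already yields $e^{(1+o(1))k}$, and a two-way split of the interval $(k/2,k]$ at the point giving exponent $\frac23 k$ is the conservative statement. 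The one genuine subtlety to get right is the arithmetic of \emph{prime powers}: a prime $p$ with $k/2<p\le k$ satisfies $p^2>k$, so only its first power matters, and I must double-check that the small-prime part $N$ really does absorb all prime-power contributions from primes $\le k/2$ --- this is immediate since $\lfloor\log_p k\rfloor$ is exactly the exponent realized by a single maximal $p$-power cycle. The main obstacle, then, is purely bookkeeping: verifying that the chosen $a,b$ satisfy the divisibility hypothesis of Proposition~\ref{p:ab} for \emph{every} partition of $k$, which follows from the ``at most one large prime, to the first power'' observation above, and then invoking Mertens'/Chebyshev's estimate $\sum_{p\le x}\log p = x+o(x)$ to read off the exponent $\frac23 k + O(k/\log k)$.
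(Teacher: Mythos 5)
Your covering step is fine: with $N=\prod_{p\le k/2}p^{\lfloor\log_p k\rfloor}$, every element order of $S_k$ indeed divides $N\cdot p$ for at most one prime $p>k/2$ (to the first power), so your pair $a=N\prod_{k/2<p\le 2k/3}p$, $b=N\prod_{2k/3<p\le k}p$ does satisfy the hypothesis of Proposition~\ref{p:ab} and yields an identity of type \eqref{e:ab}. The genuine gap is in the length estimate: you assert $\log N=O(k/\log k)$, but by the very estimate you quote at the end ($\sum_{p\le x}\log p=x+o(x)$), $\log N=\tfrac k2+o(k)$ --- the \emph{number} of primes up to $k/2$ is $O(k/\log k)$, but the sum of their logarithms is $\sim k/2$. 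Hence $\log a=\tfrac{2k}{3}+o(k)$ and $\log b=\tfrac k2+\tfrac k3+o(k)=\tfrac{5k}{6}+o(k)$, so your identity has length $e^{\frac56 k+o(k)}$, short of the claimed $e^{\frac23 k+O(k/\log k)}$; even if you rebalance the split point of the large primes, carrying the full $N$ on both sides caps you at exponent $\tfrac34 k$. Your intermediate conclusion that you get $e^{k/3+o(k)}$, ``even stronger than claimed,'' should have been a warning sign (two numbers covering all prime powers up to $k$ must satisfy $\log a+\log b\ge\log\lcm(k)\sim k$, so exponent $\tfrac13 k$ for both is impossible).

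The missing idea is an asymmetry that the paper exploits. If the order of $\sigma$ is divisible by a prime or prime power $\ell>m$ (with $m\ge k/2$), then $\sigma$ has a cycle of length at least $\ell$, so all its \emph{other} cycles live on fewer than $k-m$ points; therefore the ``small part'' attached to the large primes need only be $\lcm(k-m)$, not your full $N$. The paper accordingly takes $a=\lcm(m)$ and $b=\lcm(k-m)\cdot P(m)$, where $P(m)$ is the product of the primes and prime powers in $\{m+1,\dots,k\}$ (permutations whose longest cycle has length $>m$ but not a prime power are absorbed by $a$, since all prime-power divisors of their order are at most $\max(\ell/2,k-\ell)\le m$). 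With $m=\lfloor 2k/3\rfloor$ one gets $\log a=\tfrac23 k+O(k/\log k)$ and $\log b=\tfrac k3+\tfrac k3+O(k/\log k)$, both balanced at the claimed exponent. So your structural observation (at most one large prime per element order) is the same as the paper's, but without dropping the redundant small primes from the $b$-side and re-optimizing the cutoff, the bound of the theorem is not reached.
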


\begin{corollary} \label{seplow}
$\Sepp(n)\ge \frac 32 \log n + O\big(\frac{\log n}{\log\log n}\big)$.
\end{corollary}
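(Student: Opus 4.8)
The plan is to establish Theorem~\ref{t:ab} by an explicit choice of $a,b$ in Proposition~\ref{p:ab} and then read off Corollary~\ref{seplow} through the identity--separation correspondence. Recall (Landau) that the set $\Omega_k$ of orders of elements of $S_k$ consists precisely of the integers $m=\prod_i q_i$, where the $q_i$ are prime powers with pairwise distinct prime bases and $\sum_i q_i\le k$ (realize $m$ by a permutation whose nontrivial cycles have lengths $q_i$). So it suffices to exhibit two integers $a,b$ of size $e^{\frac23 k+O(k/\log k)}$ such that every $m\in\Omega_k$ divides $a$ or $b$, and then invoke Proposition~\ref{p:ab}.

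Fix the threshold $t=\lfloor 2k/3\rfloor$ and put
\[
a=\prod_{p\le t}p^{\lfloor\log_p k\rfloor},\qquad b=\lcm(\lceil k/3\rceil)\cdot\prod_{t<p\le k}p,
\]
with the products ranging over primes. If $m\in\Omega_k$ has no prime factor above $t$, then each prime-power piece $q_i\le k$ divides $p^{\lfloor\log_p k\rfloor}$ for its base $p\le t$, so $m\mid a$. Otherwise $m$ has a prime factor $p>t>\sqrt k$; since $p^2>k$ we get $v_p(m)=1$, and since $2t>k$ this $p$ is the unique prime factor of $m$ above $t$. The cofactor $m/p$ lies in $\Omega_{k-p}$ with $k-p\le k-t-1<\lceil k/3\rceil$, so every prime power dividing $m/p$ is smaller than $\lceil k/3\rceil$ and hence $m/p\mid\lcm(\lceil k/3\rceil)$; as $p>t\ge\lceil k/3\rceil$, this $p$ divides the second factor of $b$ and is coprime to the first, so $m\mid b$. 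Proposition~\ref{p:ab} now yields the positive identity $(xy)^a(yx)^b\cong_k(yx)^b(xy)^a$, whose length is $2a+2b$.

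To size $a$ and $b$ I would use the classical Chebyshev estimates $\theta(x)=x+O(x/\log x)$ and $\psi(x)=x+O(x/\log x)$. Separating off the primes $p\le\sqrt k$ --- the only ones that can carry exponent exceeding $1$, contributing at most $\pi(\sqrt k)\log k=O(\sqrt k)$ to $\log a$ --- gives $\log a=\theta(t)+O(\sqrt k)=\tfrac23 k+O(k/\log k)$, while $\log b=\psi(\lceil k/3\rceil)+\bigl(\theta(k)-\theta(t)\bigr)=\tfrac13 k+\tfrac13 k+O(k/\log k)=\tfrac23 k+O(k/\log k)$. Hence $\log(2a+2b)=\tfrac23 k+O(k/\log k)$, which is Theorem~\ref{t:ab}. (The value $t=2k/3$ is exactly where $\theta(t)$ balances $\psi(k-t)+\theta(k)-\theta(t)$; taking $t<k/2$ would force $b$ to contain all primes up to $k$ and kill the gain, while a much larger $t$ would blow up $a$.) For the corollary, the words $(xy)^a(yx)^b$ and $(yx)^b(xy)^a$ are not separated by any $k$-state permutational dfa, so, since $\Sepp$ is nondecreasing, $\Sepp(n)>k$ whenever $\log n\ge\tfrac23 k+O(k/\log k)$; choosing $k$ as large as this permits, i.e.\ $k=\bigl\lfloor\tfrac32\log n-C\log n/\log\log n\bigr\rfloor$ for a suitable absolute constant $C$, gives $\Sepp(n)\ge\tfrac32\log n+O(\log n/\log\log n)$.

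The combinatorial covering step above is easy once the threshold is chosen; the real work is the quantitative bookkeeping. The two places to be careful are: (i) in bounding $\log a$ and $\log b$, keeping the small-prime (higher-power) contributions genuinely of lower order and invoking a form of the Prime Number Theorem strong enough to make the remainders $O(k/\log k)$ uniformly in $k$; and (ii) in the corollary, noting that the $O(k/\log k)$ term sitting inside the exponent is not negligible compared with $\tfrac23 k$ once one takes logarithms, so the inversion must be carried out with enough care to land on the claimed $O(\log n/\log\log n)$ rather than a weaker $o(\log n)$.
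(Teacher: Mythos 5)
Your proof is correct and follows essentially the same route as the paper: the same two-block identity $(xy)^a(yx)^b\cong_k(yx)^b(xy)^a$ from Proposition~\ref{p:ab} with threshold $2k/3$, the same Chebyshev/PNT estimates giving length $e^{\frac23 k+O(k/\log k)}$, and the same inversion to obtain the bound on $\Sepp$. The only difference is cosmetic: you split on the largest prime factor of the order and absorb higher prime powers into $a$, whereas the paper splits on the presence of a long cycle and keeps the primes and prime powers from $(2k/3,k]$ in the factor $P(m)$ of $b$; the asymptotics are unaffected.
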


\begin{proof}[Proof of Theorem~\ref{t:ab}]
Take a number $\alpha$, $0<\alpha<1$. Let $m=\lfloor \alpha k\rfloor$ and $P(m)$ be the product of all primes and prime powers from the range $\{m+1,\ldots, k\}$. Choose $a=\lcm(m)$, $b=\lcm(k-m)\cdot P(m)$, and apply Proposition~\ref{p:ab}. Indeed, the order of a permutation is the least common multiple of the length of its cycles; if a permutation has no cycle of length greater than $m$, than its order divides $a$; if such a cycle exists, than all other cycles are shorter than $k-m$, so the order divides $b$. Thus we get an identity of type \eqref{e:ab} with the $a$ and $b$ chosen\footnote{It is easy to see that one can take a smaller number as $b$, replacing $k{-}m$ with $k{-}m{-}1$ and the product of $\lcm$ and $P$ with their least common multiple. However, such an improvement does not change the asymptotics: its effect is covered by the $O$-term in the asymptotic formula.}. Since the length of this identity is $2(a+b)$, we want to find the value of $\alpha$ which delivers the minimum to $a{+}b$. Clearly, $\alpha\ge 1/2$, implying $m\ge k/2$. We use standard asymptotic formulas (see, e.g., \cite{BaSh96}) $\lcm(t)=e^{t+O(\frac t{\log t})}$ and $\pi(t)=\frac t{\log t}+ O(\frac t{\log^2 t})$, where $\pi(t)$ is the number of primes smaller than $t$. To estimate $P(m)$, we note that the product of $i$ factors equals their geometric mean taken to the $i$th power. Since all factors are between $m$ and $k$, their mean is $k/\beta$ for some $\beta$ between 1 and 2. To compute the number of factors, we can use the asymptotics for $\pi(m)$ (the number of prime powers smaller than $t$ is $O(\pi(\sqrt{t}))$ and thus does not affect the asymptotics). So we have
\begin{align*}
a=&e^{m+O(\frac m{\log m})}=e^{m+O(\frac k{\log k})},\\
b=&e^{k-m+O(\frac {k-m}{\log (k-m)})}\cdot \Big(\frac k{\beta}\Big)^{\frac k{\log k}-\frac m{\log m}+ O(\frac k{\log^2 k})}=e^{2k-2m+O(\frac k{\log k})}
\end{align*}
Thus the minimum of $a{+}b$ is reached at $\alpha=2/3$ so that $m=2k/3$, and this minimum is $e^{\frac 23 k+O(\frac{k}{\log k})}$, as required.
\end{proof}

A more involved class of equations is defined in the following proposition. The corresponding conditions can be easily extended to get identities with any even number of blocks of the form $(xy)^{a}$ and $(yx)^{b}$, but it is not clear if it is possible to build short identities of this type for any $k$.

\begin{proposition} \label{p:abcd}
Let $a,b,c,d$ be such that every order $q$ of an element of $S_k$ satisfies at least one the following conditions or their counterparts obtained by swapping $b$ with $c$, and $a$ with $d$: (i) $q$ divides both $a$ and $c$, (ii) $q$ divides both $a+c$ and $b$, (iii) $q$ divides $a$ and $b\equiv d \pmod q$. Then $S_k$ satisfies the identity
\begin{equation} \label{e:abcd}
(xy)^a(yx)^b(xy)^c(yx)^d\cong_k (yx)^d(xy)^c(yx)^b(xy)^a\,.
\end{equation}
\end{proposition}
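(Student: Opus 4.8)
The plan is to follow the template of Proposition~\ref{p:ab}: fix an arbitrary pair $x,y\in S_k$, put $g=xy$ and $h=yx$, and run a case analysis on their common order. The key preliminary observation is that $h=x^{-1}(xy)x$, so $g$ and $h$ are conjugate and hence share an order $q$; moreover $q$ is the order of some element of $S_k$ (indeed every element-order is realized, e.g.\ by taking $y=1$), so by hypothesis this particular $q$ satisfies one of (i)--(iii) or one of the three counterpart conditions. Throughout I will use only the elementary facts $g^n=1\iff q\mid n\iff h^n=1$ and $h^m=h^n\iff m\equiv n\pmod q$ (and likewise for $g$).

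\paragraph{The six cases.}
Each of the three primary cases is then a single computation in the subgroup generated by $g$ and $h$. If (i) holds, then $g^a=g^c=1$ and both sides of \eqref{e:abcd} reduce to $h^{b+d}$. If (ii) holds, then $h^b=1$ and $q\mid a+c$, so the left side equals $g^{a+c}h^d=h^d$ while the right side equals $h^dg^{c+a}=h^d$. If (iii) holds, then $g^a=1$ deletes the first block of the left side and the last block of the right side, leaving $h^bg^ch^d$ against $h^dg^ch^b$, and these become identical once $h^b$ is rewritten as $h^d$ using $b\equiv d\pmod q$. For the counterpart conditions I would invoke a symmetry rather than repeat the computations: interchanging $x$ and $y$ transforms \eqref{e:abcd} for the tuple $(a,b,c,d)$ into \eqref{e:abcd} for the tuple $(d,c,b,a)$ (the two sides of the identity merely swap), and the stated counterpart conditions for $(a,b,c,d)$ are, term by term, exactly conditions (i)--(iii) written for $(d,c,b,a)$. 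Hence those three cases follow from the primary ones applied to the relabelled tuple. (Alternatively, each counterpart case admits the same kind of one-line substitution as above.)

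\paragraph{Obstacles.}
I do not foresee a substantial obstacle: every case is an immediate group identity. The only matters needing care are bookkeeping --- reading the hypothesis ``per element order $q$'' and checking that every such $q$ actually arises as the order of a product $xy$ --- and carrying out the exponent arithmetic in case (iii) and its counterpart strictly modulo $q$, not modulo $\lcm(k)$ or the like. The mildly delicate point, if any, is to present the $x\leftrightarrow y$ symmetry crisply enough that the reader sees at once why the listed counterpart conditions are the correct ones.
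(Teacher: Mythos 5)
Your proof is correct and takes essentially the same route as the paper, which simply notes that $xy$ and $yx$ share an order and that each condition makes terms vanish so the sides coincide; you have just written out the six cases (and the $x\leftrightarrow y$ symmetry reducing the counterpart conditions to the primary ones) explicitly. In particular your handling of case (iii), where one must also rewrite $h^b$ as $h^d$ modulo $q$ rather than merely delete blocks, correctly fills in the detail the paper leaves implicit.
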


\begin{proof}
We again use the fact that for any $x,y\in S_k$ the elements $(xy)$ and $(yx)$ have the same order. It is easy to see that each of the conditions (i)--(iii) forces some terms to vanish from both sides of \eqref{e:abcd} in a way that the remaining words are graphically equal. 
\end{proof}

We use Propositions~\ref{p:ab} and~\ref{p:abcd} to run further computer experiments; in Table~\ref{idSk} we present the parameters of the shortest identities of types \eqref{e:ab} and \eqref{e:abcd}, obtained by exhaustive search, and compare their lengths to the length $\lcm(k)$ of the unary identity. Note that the parameters $a$ and $b$ of the shortest identity of type \eqref{e:ab} in most cases are equal to those chosen by the rule described in the proof of Theorem~\ref{t:ab}. For example, for $k=23$ we have $a=\lcm(16)$, $b=\lcm(6)\cdot17\cdot19\cdot23$. So it looks probable that no other way of choosing the pair $(a,b)$ can improve the result of Theorem~\ref{t:ab}. The identities of type \eqref{e:abcd} for small $k$ are shorter than the identities of type \eqref{e:ab}, but it is unclear whether this is true for all $k$.

\begin{table}[!htb]
\caption{Parameters of the shortest positive identities of types \eqref{e:ab},\eqref{e:abcd}.}
\label{idSk}
\arraycolsep=2pt
$$
\begin{array}{|c||c|c|c|c|c||c|c|c||c|}
\hline
&\multicolumn{5}{|c||}{\text{Identities of type \eqref{e:abcd}}}&\multicolumn{3}{|c||}{\text{Identities of type \eqref{e:ab}}}&\\
\hline
k&a&b&c&d&\text{Len}&a&b&\text{Len}&\lcm(k)\\
\hline
5,6&1&6&5&4&32&12&5&34&60\\
7&2&14&12&10&76&60&7&134&420\\
8&23&60&7&24&228&60&56&232&840\\
9&18&60&42&24&288&180&56&472&2520\\
10&18&60&42&24&288&120&126&492&2520\\
11&48&180&132&84&888&840&198&2076&27720\\
12&24&222&420&198&1728&840&198&2076&27720\\
13&&&&&&2520&286&5612&360360\\
14&&&&&&2520&858&6756&360360\\
15&&&&&&2520&1716&8472&360360\\
16&&&&&&5040&8580&27240&720720\\
17&&&&&&27720&10608&76656&12252240\\
18&&&&&&55440&13260&137400&12252240\\
19&&&&&&55440&251940&614760&232792560\\
20&&&&&&360360&15504&751728&232792560\\
21&&&&&&360360&77520&875760&232792560\\
22&&&&&&360360&77520&875760&232792560\\
23&&&&&&720720&445740&2332920&5354228880\\
\hline
\end{array}
$$
\end{table}

\section{Conclusion}

In this paper, we did the very first step in improving the lower bound on words separation (or, from the other point of view, improving the upper bound on the shortest identity in full transformation semigroups and the shortest positive identity in symmetric groups). Apart from the experimentally obtained values of the separation functions $\Sep$ and $\Sepp$ for small arguments, we obtained two asymptotic results:
\begin{itemize}
\item the logarithmic lower bound for $\Sep(n)$ is improved by an additive sublogarithmic term for infinitely many values of $n$;
\item the logarithmic lower bound for $\Sepp(n)$ is improved by a factor of $3/2$.
\end{itemize}
The obvious next step should be an attempt to improve the function $\Sep$ by some factor and prove a superlogarithmic lower bound for $\Sepp$. Our general impression is that both such improvements are possible. On the other hand, we are not so optimistic about the existence of a superlogarithmic lower bound for $\Sep$.

\bibliographystyle{plain} 
\bibliography{my_bib} 

\newcommand{\noopsort}[1]{} \newcommand{\singleletter}[1]{#1}
\begin{thebibliography}{10}

\bibitem{AVG09}
J.~Almeida, M.~V. Volkov, and S.~V. Goldberg.
\newblock Complexity of the identity checking problem for finite semigroups.
\newblock {\em J. Math. Sciences}, 158(5):605--614, 2009.

\bibitem{BaSh96}
E.~Bach and J.~Shallit.
\newblock {\em Algorithmic Number Theory. Vol. 1: Efficient Algorithms}.
\newblock The MIT Press, 1996.

\bibitem{BoMc11}
K.~Bou-{Rabee} and D.~B. Mc{Reynolds}.
\newblock Asymptotic growth and least common multiples in groups.
\newblock {\em Bull. Lond. Math. Soc.}, 43(6):1059--1068, 2011.

\bibitem{DESW11}
E.~D. Demaine, S.~Eisenstat, J.~Shallit, and D.~A. Wilson.
\newblock Remarks on separating words.
\newblock In {\em Descriptional Complexity of Formal Systems - 13th
  International Workshop, {DCFS} 2011. Proceedings}, volume 6808 of {\em
  Lecture Notes in Computer Science}, pages 147--157. Springer, 2011.

\bibitem{GoKo86}
P.~Goralcik and V.~Koubek.
\newblock On discerning words by automata.
\newblock In {\em Automata, Languages and Programming, 13th International
  Colloquium, ICALP86. Proceedings}, volume 226 of {\em Lecture Notes in
  Computer Science}, pages 116--122. Springer, 1986.

\bibitem{HeSe14}
H.~Helfgott and {\'A}.~Seress.
\newblock On the diameter of permutation groups.
\newblock {\em Annals of Math.}, 179(2):611--658, 2014.

\bibitem{KSZ13}
J.~Karhum{\"{a}}ki, A.~Saarela, and L.~Q. Zamboni.
\newblock On a generalization of {Abelian} equivalence and complexity of
  infinite words.
\newblock {\em J. Comb. Theory, Ser. {A}}, 120(8):2189--2206, 2013.

\bibitem{Kli12}
O.~Klima.
\newblock Identity checking problem for transformation monoids.
\newblock {\em Semigroup Forum}, 84(3):487--498, 2012.

\bibitem{KoTh16}
G.~Kozma and A.~Thom.
\newblock Divisibility and laws in finite simple groups.
\newblock {\em Mathematische Annalen}, 364(1):79--95, 2016.

\bibitem{Lan03}
E.~Landau.
\newblock {\"U}ber die maximalordnung der permutationen gegebenen grades.
\newblock {\em Arch. Math. Phys. Ser. 3}, 5:92--103, 1903.

\bibitem{PSSSV94}
R.~P{\"o}schel, M.~V. Sapir, N.~W. Sauer, M.~G. Stone, and M.~V. Volkov.
\newblock Identities in full transformation semigroups.
\newblock {\em Algebra Universalis}, 31:580--588, 1994.

\bibitem{Rob89}
J.~M. Robson.
\newblock Separating strings with small automata.
\newblock {\em Inf. Process. Lett.}, 30(4):209--214, 1989.

\bibitem{Rob96}
J.~M. Robson.
\newblock Separating words with machines and groups.
\newblock {\em RAIRO Inform. Theor. Appl.}, 30(1):81--86, 1996.

\end{thebibliography}

\end{document}